\def\r{\mathbb R}
\def\t{\mathsf T}
\newtheorem{theorem}{Theorem}[section]
\newtheorem{proposition}[theorem]{Proposition}
 \theoremstyle{definition}
\newtheorem{remark}[theorem]{Remark}
\begin{document}

\title{ Invariant $\lambda$-translators for the Gauss curvature flow in Euclidean space}
\author{Muhittin Evren Aydin}
\address{Department of Mathematics, Faculty of Science, Firat University. 23200 Elazig, Turkey.}
\email{meaydin@firat.edu.tr}
\address{ Departamento de Geometr\'{\i}a y Topolog\'{\i}a\\  Universidad de Granada. 18071 Granada, Spain}
\email{rcamino@ugr.es}
\author{Rafael L\'opez}
\begin{abstract}
A $\lambda$-translator is a surface in Euclidean space $\r^3$ whose Gauss curvature $K$ satisfies   $K=\langle N, \vec{v} \rangle +\lambda$, where   $N$ is the Gauss map, $\vec{v}$ is a fixed direction, and $\lambda \in \r$. In this paper, we classify all $\lambda$-translators that are invariant by a one-parameter group of translations and a one-parameter group of rotations. 
 \end{abstract}

\keywords{Gauss curvature flow, translator, cylindrical surface, rotational surface}
\subjclass{53A05, 53C21, 53C42}
\maketitle

\section{Introduction}

Let $\vec{v}$ be a unit vector in Euclidean space $\r^3$, and $\lambda\in\r$. A surface $\Sigma$ immersed in $\r^3$ is said to be a {\it $\lambda$-translator} for the Gauss curvature flow if its Gauss curvature $K$ satisfies 
\begin{equation}\label{eq1}
K=\langle N, \vec{v} \rangle +\lambda, \quad \vec{v} \in \r^3, \quad |\vec{v}|=1, 
\end{equation}
where   $N$ is the unit normal vector of $\Sigma$. The vector $\vec{v}$ is called the  {\it speed} of the flow.  For simplicity, we will refer to such surfaces as $\lambda$-translators. A  $\lambda$-translator with $\lambda =0$ is  called a translator. 
 When $\lambda=0$, these translators   correspond to solitons of the flow by Gauss curvature. In this situation, the surface $\Sigma$ evolves under the Gauss curvature flow purely by translation in a fixed direction $\vec{v}$. That is, the family of surfaces $\{\Sigma+t\vec{v}:t\in \r \}$ satisfies the property that, at each fixed $t$, the normal component of the velocity vector $\vec{v}$ at each point is equal to the Gauss curvature at that point.  

The study of the flow by Gauss curvature  goes back to the work of Firey to model the shape of  tumbling stones \cite{fi}. The generalization in    higher dimensions is due to Tso \cite{ts}. See also \cite{an,an3,cdl,ccd,ur}. Later, this was   generalized to the flow by the powers of the Gauss curvature as discussed in \cite{ch,ur2,ur3}, and further improved in \cite{al,al2,chu,cdl,cdkl,cck,jbj}. 

Equation \eqref{eq1} is invariant by rigid motions of $\r^3$. If the orientation on $\Sigma$ is reversed, then $\Sigma$ becomes a $\lambda$-translator with speed $-\vec{v}$. Hence, fixed the speed $\vec{v}$, the sign of $\lambda$ cannot be prescribed. Examples of $\lambda$-translators are the following: 

\begin{enumerate}
\item Planes. Given $\vec{v}$, any plane is a $\lambda$-translator with speed $\vec{v}$ if $\lambda=-\langle N,\vec{v}\rangle$.
\item Circular cylinders whose rotation axis is parallel to $\vec{v}$. Here $\lambda=0$ because $\langle N,\vec{v}\rangle=0$ and $K=0$.
\item Rotational cones. If $\Sigma$ is a rotational cone whose rotation axis is parallel to $\vec{v}$, and since $K=0$, then $\Sigma$ is a $\lambda$-translator, where $\lambda=-\cos\theta$, and $\theta$ is the inner angle of the cone.
\end{enumerate}

In this paper we will investigate $\lambda$-translators that are invariant by two types of one-parameter groups of rigid motions of $\r^3$: the group of translations determined by a vector $\vec{w}$, and the group of rotations determined by a fixed axis $\vec{w}$. The objective of this paper is the classification of all $\lambda$-translators within these two classes of surfaces. Notice that in both of the above cases, there is not a {\it priori} relationship between the vector $\vec{w}$ and the speed $\vec{v}$ in the definition of a $\lambda$-translator. However, in either case, we will show that $\vec{w}$ must be parallel to $\vec{v}$ (Thm. \ref{pr-cyl} and Prop. \ref{pr-ax}).

On the one hand, a cylindrical surface in $\r^3$ is defined as a surface  invariant by a one-parameter group of translations. In Thm. \ref{pr-cyl}, we provide a complete classification of cylindrical $\lambda$-translators. On the other hand, cylindrical surfaces belong to two well-known classes of surfaces: ruled surfaces and translation surfaces. In Sect. \ref{s2}, we also study $\lambda$-translators of ruled and translation types. We classify in Thm. \ref{pr-rul} the  non-cylindrical ruled $\lambda$-translator  showing that the Gauss curvature satisfies $K=0$ and that the ruled surface is a tangent surface whose base curve is a general helix. Translation $\lambda$-translators are classified in   Thm \ref{pr-tra} proving that they must be   cylindrical surfaces.

In Sects. \ref{s3} and \ref{s4}, we study $\lambda$-translators that are invariant by a one-parameter group of rotations, providing a complete classification in Thms. \ref{t1}, \ref{t2}, and \ref{t5}. As an immediate observation in Sect. \ref{s3}, and as illustrated by the above examples of $\lambda$-translators, we prove in Prop. \ref{pr-str} that all rotational surfaces generated by straight lines are $\lambda$-translators. Among the examples in our classification, and depending on the range of $\lambda$, Thms. \ref{t1} and \ref{t2} establish the existence of rotational $\lambda$-translators that intersect the rotation axis orthogonally. The construction of convex entire graphs is also included in Thm. \ref{t2}. In contrast, as a consequence of Thm. \ref{t2}, we observe the non-existence of closed rotational $\lambda$-translators. 
For closed $\lambda$-translators, we prove in Thm. \ref{t-gb}   that $\lambda \geq 0$ and     the genus is $0$ ($\lambda>0$) or $1$ ($\lambda=0$).  

\section{Cylindrical $\lambda$-translators}\label{s2}

Cylindrical surfaces belong to a wider class of surfaces called ruled surfaces. A ruled surface in $\r^3$ is constructed by moving a straight-line along a given curve. The straight-line and the curve are called the ruling and the base curve of the surface, respectively.  If the rulings are all parallel straight-lines, then the ruled surface becomes cylindrical. A parametrization of a ruled surface $\Sigma$ is 
\begin{equation}
\Psi(s,t)=\gamma(s)+tw(s), \quad s\in I \subset \r,t\in\r, \label{rule}
\end{equation}
where $ \gamma \colon I \to \mathbb{R}^3$,  $\gamma = \gamma(s)$, is the base curve parametrized by arc-length and
$w(s)$ is a unit vector field along $\gamma$ which indicates  the direction of rulings of $\Sigma$.

First, we consider the cylindrical $\lambda$-translators.  In this case, the vector field $w(s)$ in the parametrization \eqref{rule} is a fixed vector $\vec{w}$. Then $\Sigma$ is invariant by the group of translations defined by $\vec{w}$ and   the base curve $\gamma$ can be chosen to lie in a plane orthogonal to $\vec{w}$. The following result classifies cylindrical $\lambda$-translators.

\begin{theorem} \label{pr-cyl}
Cylindrical $\lambda$-translators with speed $\vec{v}$ are either planes orthogonal to $\vec{v}$ ($\lambda =\pm 1$) or translators whose rulings are parallel to $\vec{v}$.
\end{theorem}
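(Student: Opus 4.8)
The starting point is the elementary but crucial fact that every cylindrical surface is flat: since the rulings are straight lines, one of the principal curvatures vanishes identically, so $K\equiv 0$. Consequently the defining equation \eqref{eq1} collapses to the constraint
\begin{equation*}
\langle N,\vec{v}\rangle=-\lambda,
\end{equation*}
a constant. Thus the whole problem reduces to determining which flat cylinders have a Gauss map whose inner product with the fixed direction $\vec{v}$ is constant.

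To exploit this, I would fix coordinates adapted to the translation group, writing $\vec{w}=(0,0,1)$ and $\gamma(s)=(x(s),y(s),0)$ for the base curve, parametrized by arc length in the plane orthogonal to $\vec{w}$. Introducing the tangent angle $\phi(s)$ by $x'=\cos\phi$, $y'=\sin\phi$, a direct computation gives the unit normal $N=(\sin\phi,-\cos\phi,0)$, which is automatically orthogonal to $\vec{w}$. Writing $\vec{v}=(v_1,v_2,v_3)$, the constraint becomes
\begin{equation*}
v_1\sin\phi(s)-v_2\cos\phi(s)=-\lambda\qquad\text{for all }s,
\end{equation*}
an identity that, tellingly, does not involve $v_3$. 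Differentiating in $s$ yields $\phi'\,(v_1\cos\phi+v_2\sin\phi)=0$, which is the equation on which the case analysis hinges.

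The dichotomy is then governed by the curvature of $\gamma$, i.e.\ by whether $\phi'$ vanishes. If $\gamma$ is not a straight line, there is an open interval on which $\phi'\neq 0$, forcing $v_1\cos\phi+v_2\sin\phi=0$ there; together with the original constraint this is a linear system in $(v_1,v_2)$ with determinant $-1$, whose solution is $v_1=-\lambda\sin\phi$, $v_2=\lambda\cos\phi$. Since $v_1,v_2$ are constants while $\phi$ is non-constant, this forces $\lambda=0$ and $v_1=v_2=0$, so that $\vec{v}=\pm\vec{w}$ and the surface is a translator whose rulings are parallel to $\vec{v}$. If instead $\phi$ is constant, then $\gamma$ is a straight line and $\Sigma$ is a plane with constant normal $N$; here $\lambda=-\langle N,\vec{v}\rangle$, and since $N$ and $\vec{v}$ are both unit vectors, Cauchy--Schwarz shows that $|\lambda|=1$ holds precisely when $N=\mp\vec{v}$, i.e.\ exactly when the plane is orthogonal to $\vec{v}$.

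The step I expect to be the main obstacle is the curved case: one must extract from the single differentiated identity both conclusions $\lambda=0$ and $v_1=v_2=0$ simultaneously, and then observe that, since these are statements about the constants $\lambda$ and $\vec{v}$, they propagate globally even though they were derived only on the subinterval where $\phi'\neq 0$. The plane case is then routine, the only care being to record that the orthogonality of the plane to $\vec{v}$ is equivalent to $\lambda=\pm1$.
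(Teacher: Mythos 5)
Your proposal is correct and takes essentially the same route as the paper: both reduce Eq. \eqref{eq1} via $K\equiv 0$ to the constant-angle condition $\langle N,\vec{v}\rangle=-\lambda$, differentiate it along the base curve, and split on whether the curvature of $\gamma$ vanishes, concluding $\lambda=0$ and $\vec{v}$ parallel to the rulings in the curved case. Your tangent-angle coordinates and the $2\times 2$ linear system are just an explicit version of the paper's Frenet-frame argument ($\kappa\langle\gamma',\vec{v}\rangle=0$, then $\mathbf{n}\perp\vec{v}$ forces $\lambda=0$), so the two proofs coincide in substance.
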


\begin{proof}
Let $\vec{v}\in \r^3$ be a unit vector, $|\vec{v}|=1$. Any plane orthogonal to $\vec{v}$ is a $(\pm 1)$-translator, meaning it satisfies Eq. \eqref{eq1} with $\lambda=\pm 1$. This proves the first part of the theorem.

Suppose now that $\Sigma$ is a non-planar cylindrical surface and we will prove that $\lambda=0$. In such a case, we will also establish that the rulings are parallel to $\vec{v}$. By contradiction, suppose $\lambda\not=0$. The surface $\Sigma$ is given by  $\Sigma=\{\gamma(s)+t\vec{w}:t\in \r,s\in I \}$, where $\gamma\colon I\subset\r\to\r^3$ is parametrized by arc-length and lies in a plane orthogonal to $\vec{w}$. Since $K=0$ for any cylindrical surface, Eq.  \eqref{eq1} can be written as
\begin{equation}\label{nn}
\langle {\bf n}(s),\vec{v}\rangle+\lambda=0,
\end{equation}
for all $s\in I$, where ${\bf n}$ is the unit normal vector of $\gamma$ as planar curve. Differentiating \eqref{nn} with respect to $s$, we have
$\kappa(s)\langle\gamma'(s),\vec{v}\rangle=0$ for all $s\in I$. Since $\Sigma$ is non-planar, $\gamma$ is not a straight-line, and therefore we have $\kappa\not=0$. We then deduce $\langle\gamma'(s),\vec{v}\rangle=0$ for all $s\in I$; that is, $\gamma$ lies in a plane orthogonal to $\vec{v}$. This proves that $\vec{v}$ and $\vec{w}$ are parallel. As a consequence ${\bf n}(s)$ is orthogonal to $\vec{v}$ and Eq. \eqref{nn} implies $\lambda=0$.

\end{proof}

We now classify the non-cylindrical ruled $\lambda$-translators. In the parametrization \eqref{rule} we can  assume that $w'(s) \neq 0$ on $I$. Moreover, we can choose the base curve  $\gamma$ to be the striction line which implies that   $\langle \gamma'(s),w'(s)\rangle=0$, for every $s\in I$. The classification is the following.  

\begin{theorem} \label{pr-rul}
The only non-cylindrical ruled $\lambda$-translators with speed $\vec{v}$ are the tangent surfaces whose base curves are general helices with axis $\vec{v}$. Moreover, the Gauss curvature satisfies $K=0$, $\lambda \neq 0$, and $\lambda \in [-1,1]$.
\end{theorem}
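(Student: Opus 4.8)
The plan is to convert \eqref{eq1} into a single functional identity in the ruling parameter $t$ and to extract pointwise algebraic constraints from its behaviour along each ruling. For the parametrization \eqref{rule} with the striction-line normalization ($|\gamma'|=|w|=1$, $\langle\gamma',w'\rangle=0$ and $w'\neq0$) a direct computation of the fundamental forms gives $E=1+t^2|w'|^2$, $F=\langle\gamma',w\rangle$ and $G=1$, hence
\[
EG-F^2=a(s)+b(s)\,t^2,\qquad a=1-\langle\gamma',w\rangle^2\ge0,\quad b=|w'|^2>0 .
\]
Since $\Psi_{tt}=0$ forces $\langle\Psi_{tt},N\rangle=0$, the Gauss curvature of a ruled surface takes the form $K=-[\gamma',w,w']^2/(EG-F^2)^2$, where $[\cdot,\cdot,\cdot]$ denotes the scalar triple product; in particular $K\le0$, with $K\equiv0$ exactly for developable surfaces. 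Writing $N=(\Psi_s\times\Psi_t)/\sqrt{EG-F^2}$ and expanding $\Psi_s\times\Psi_t=\gamma'\times w+t\,(w'\times w)$ I also obtain
\[
\langle N,\vec{v}\rangle=\frac{P(s)+Q(s)\,t}{\sqrt{a+b\,t^2}},\qquad P=[\gamma',w,\vec{v}],\quad Q=[w',w,\vec{v}] .
\]
Thus \eqref{eq1} becomes the identity $-[\gamma',w,w']^2/(a+bt^2)^2=(P+Qt)/\sqrt{a+bt^2}+\lambda$, required to hold for every $t$ for which $\Psi$ is regular.

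The heart of the argument is an asymptotic analysis of this identity along each ruling. Letting $t\to\pm\infty$, the left-hand side tends to $0$ while the right-hand side tends to $\pm Q/\sqrt{b}+\lambda$. If $a(s)>0$ the ruling is regular for all $t\in\r$, so both limits must vanish, forcing $\lambda=0$ and $Q(s)=0$; the remaining identity $-[\gamma',w,w']^2=P\,(a+bt^2)^{3/2}$ then forces $P(s)=0$ and $[\gamma',w,w']=0$. But with $a(s)>0$ the vectors $\gamma'$ and $w$ are independent, and $[\gamma',w,w']=0$ together with $\langle w',\gamma'\rangle=\langle w',w\rangle=0$ would make $w'$ a nonzero vector of $\mathrm{span}\{\gamma',w\}$ orthogonal to both spanning vectors, which is impossible. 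Hence $a\equiv0$, i.e. $w=\pm\gamma'$, and the surface is a tangent surface. Taking $w=\gamma'=\mathbf{t}$ (the unit tangent), the striction condition is automatic, $w'=\mathbf{t}'=\kappa\mathbf{n}$ with $\kappa\neq0$, and $[\gamma',w,w']=[\mathbf{t},\mathbf{t},\kappa\mathbf{n}]=0$, so $K\equiv0$; this settles the first assertion.

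On the tangent surface the identity collapses: $P=[\mathbf{t},\mathbf{t},\vec{v}]=0$, $a=0$, $b=\kappa^2$, and $Q=\kappa[\mathbf{n},\mathbf{t},\vec{v}]=-\kappa\langle\mathbf{b},\vec{v}\rangle$, so on the sheet $t>0$ equation \eqref{eq1} reduces to $0=\langle N,\vec{v}\rangle+\lambda=-\langle\mathbf{b},\vec{v}\rangle+\lambda$. Therefore $\langle\mathbf{b},\vec{v}\rangle=\lambda$ is constant (the sheet $t<0$ gives the opposite sign, which is the orientation reversal $\vec{v}\mapsto-\vec{v}$ already noted in the introduction), and $|\mathbf{b}|=1$ yields $\lambda\in[-1,1]$. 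I then differentiate $\langle\mathbf{b},\vec{v}\rangle=\lambda$ and use $\mathbf{b}'=-\tau\mathbf{n}$ to get $\tau\langle\mathbf{n},\vec{v}\rangle=0$; on the non-planar part $\tau\neq0$ (the planar case $\tau\equiv0$ degenerates to a piece of plane, which is cylindrical and already excluded) this gives $\langle\mathbf{n},\vec{v}\rangle=0$, whence $\langle\mathbf{t},\vec{v}\rangle'=\kappa\langle\mathbf{n},\vec{v}\rangle=0$ shows that $\gamma$ is a general helix with axis $\vec{v}$. Finally $\lambda\neq0$: if $\lambda=0$ then $\langle\mathbf{b},\vec{v}\rangle=\langle\mathbf{n},\vec{v}\rangle=0$, so $\vec{v}=\pm\mathbf{t}$ is constant and $\kappa=0$, a contradiction.

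I expect the main obstacle to be the rigorous passage from the single functional identity in $t$ to the pointwise algebraic constraints, namely eliminating the regular case $a(s)>0$ and correctly accounting for the orientation change across the striction line $t=0$ separating the two sheets of the tangent surface; once the surface is identified as a tangent developable, the Frenet computation that produces the general helix is routine.
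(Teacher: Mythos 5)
Your proof is correct, and although it shares the paper's skeleton --- striction-line parametrization of \eqref{rule}, then a dichotomy between the developable case (your $a\equiv 0$, the paper's $\alpha=0$) and the non-developable case (your $a>0$, the paper's $\alpha\neq 0$), with essentially the same Frenet argument producing the general helix --- your treatment of the non-developable case is genuinely different and simpler. There the paper invokes \cite[Theorem 6]{al} to exclude $\lambda=0$, and for $\lambda\neq 0$ it squares Eq.~\eqref{eq1} into a degree-$8$ polynomial identity in $t$, reaching a contradiction from the coefficients $A_8$, $A_7$, $A_4$, $A_2$. You instead let $t\to\pm\infty$ along a fixed ruling: $K\to 0$ while $\langle N,\vec{v}\rangle\to\pm Q/\sqrt{b}$, so both one-sided limits of \eqref{eq1} force $\lambda=0$ and $Q(s)=0$, and the residual identity $-[\gamma',w,w']^2=P\,(a+bt^2)^{3/2}$ then kills $P$ and the distribution parameter, contradicting $a(s)>0$. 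This disposes of $\lambda=0$ and $\lambda\neq 0$ uniformly, needs no external citation, and replaces the coefficient computations by a two-line limit. The one caveat is that your argument uses the validity of \eqref{eq1} for unbounded $t$, i.e.\ along entire rulings; this is legitimate under the paper's convention $t\in\r$ in \eqref{rule}, but if one wanted the statement for a ruled strip (bounded $t$-interval) you should add that, for $a>0$, both sides of the identity are real-analytic in $t$ on all of $\r$, so agreement on an open interval propagates to all of $\r$ --- the paper's polynomial-coefficient argument is local in $t$ as it stands. On the developable side, your explicit treatment of the two sheets $t>0$ and $t<0$ of the tangent surface (orientation flip $N=\mp\mathbf{b}$) and your spelled-out proof that $\lambda\neq 0$ are slightly more careful than the paper's terse assertions, and both are accurate.
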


\begin{proof}
Let $\Sigma$ be a ruled surface which satisfies Eq. \eqref{eq1}, and assume that $\Sigma$ is non-cylindrical.  Then, $\Sigma$ admits the parametrization given by \eqref{rule}, with the condition $\langle \gamma'(s),w'(s)\rangle=0$, for every $s\in I$. We set
$$
\alpha=\frac{(\gamma',w,w')}{|w'|^2},
$$
where $(\gamma',w,w')$ is the determinant of the vectors $\gamma'$, $w$, and $w'$. A direct computation yields
$$
K=-\frac{\alpha^2}{(\alpha^2+t^2)^2}, \quad N=\frac{\alpha w'+tw'\times w}{|w'|\sqrt{\alpha^2+t^2}}.
$$
We consider two separate cases:
\begin{enumerate}
\item  Case $\alpha=0$. In this case $\gamma'$ is parallel to $w$, because $w'$ is orthogonal to both $\gamma'$ and $w$. Hence, $\Sigma$ is a tangent surface. Let ${\bf n}$ and ${\bf b}$ denote the principal normal and binormal vector fields along $\gamma$, respectively. Then, $N=-{\bf b}$ and Eq. \eqref{eq1} reduces to $\langle {\bf b} , \vec{v} \rangle =\lambda$, where $|\lambda| \leq 1$. If $\lambda=0$, then $\Sigma$ would be a plane, which contradicts our assumption. Differentiating both sides with respect to $s$, we obtain $\tau\langle {\bf n} , \vec{v} \rangle =0$, where $\tau$ is the torsion of $\gamma$. Note that $\tau \neq 0$, because otherwise $\Sigma$ would again be planar. Consequently, $\langle {\bf n} , \vec{v} \rangle =0$, which means that $\gamma$ is a general helix with axis $\vec{v}$. 

\item  Case $\alpha\neq 0$. According to \cite[Theorem 6]{al}, a ruled surface with $K\neq 0$ does not satisfy Eq. \eqref{eq1} when $\lambda = 0$. Hence, by assuming $\lambda \neq 0$, we will arrive at a contradiction. Then, Eq.   \eqref{eq1} can be written as
$$
-\frac{\alpha^2+\lambda(\alpha^2+t^2)^2}{(\alpha^2+t^2)^2}=\frac{\alpha \langle w', \vec{v} \rangle+t (w',w,\vec{v})}{|w'|\sqrt{\alpha^2+t^2}}.
$$
By squaring both sides of this equation and passing to one side,  we obtain 
$$
\sum_{n=0}^8A_n(s)t^n=0,
$$
where $A_n(s)$ are smooth functions on the variable $s$. Looking this equation as a polynomial equation on the variable $t$, we deduce that the coefficients $A_n$ must all vanish. A computation of $A_8$ gives
$$
\lambda^2-\frac{(w',w,\vec{v})^2}{|w'|^2}=0.
$$
Since $\lambda \neq 0$, it follows that $(w',w,\vec{v})\neq 0$. Using $A_7=0$, we obtain
$$
2\alpha\frac{\langle w', \vec{v} \rangle (w',w,\vec{v})}{|w'|^2}=0.
$$
Thus we deduce $\langle w', \vec{v} \rangle=0$. Finally, from $A_2=0$ and $A_4=0$ we derive
$$
\lambda\alpha^4(4+3\lambda\alpha^2)=0, \quad \lambda\alpha^2(2+3\lambda \alpha^2)=0,
$$
which leads to a contradiction.
\end{enumerate}
\end{proof}

\begin{remark}
It was proved in Prop. 2.4 that non-cylindrical ruled $\lambda$-translators have zero Gauss curvature. In this case, Eq.   \eqref{eq1} becomes $\langle N, \vec{v} \rangle = -\lambda$. Therefore, the angle between the unit normal vector of the surface and the direction $\vec{v}$ is constant. This implies that the surface belongs to the class of constant angle surfaces, a topic that has been extensively studied in differential geometry (see, for example, \cite{cd, dfvv, dsr, mn}). Recently, it was shown in \cite{loy} that a constant angle surface is the tangent surface on a general helix, which aligns with Prop. 2.4.
\end{remark}

The second family of $\lambda$-translators we study consists of surfaces that can be expressed in separable  variables in the form $z=f(x)+g(y)$, where  $f:I\subset \r \to \r$ and $g:J\subset \r \to \r$ are smooth functions. A parametrization of the surface is
 $$
\Psi(x,y)=(x,y,f(x)+g(y)) 
$$ 
and the surface is called  a {\it translation surface}. Notice that the surface is obtained by the translation of the curve $x\mapsto (x,0,f(x))$ along the curve $y\mapsto(0,y,g(y))$ and vice-versa. In the following result, we classify all $\lambda$-translators that are of translation type. We point out that translation surfaces are graphs $z=f(x)+g(y)$ on the $xy$-plane and there is no a {\it priori} relation between the $xy$-plane and the speed $\vec{v}$ in the definition of a $\lambda$-translator. 

\begin{theorem} \label{pr-tra}
Cylindrical surfaces are the only $\lambda$-translators of the form $z=f(x)+g(y)$.
\end{theorem}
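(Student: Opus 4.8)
The plan is to turn Eq.~\eqref{eq1} into an algebraic separability condition. For $\Psi(x,y)=(x,y,f(x)+g(y))$ a direct computation gives
\[
K=\frac{f''(x)\,g''(y)}{W^2},\qquad N=\frac{(-f'(x),-g'(y),1)}{\sqrt W},\qquad W:=1+f'(x)^2+g'(y)^2 .
\]
Writing $\vec v=(v_1,v_2,v_3)$ with $v_1^2+v_2^2+v_3^2=1$, Eq.~\eqref{eq1} multiplied by $W^2$ becomes
\[
f''(x)\,g''(y)=H\bigl(f'(x),g'(y)\bigr),\qquad H(p,q):=W^{3/2}(v_3-v_1p-v_2q)+\lambda W^2,
\]
now with $W=1+p^2+q^2$. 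Since the surface is cylindrical exactly when $f''\equiv0$ or $g''\equiv0$, the goal is to rule out the case in which both are somewhere nonzero.

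First I would argue by contradiction, assuming $f''(x_0)\neq0$ and $g''(y_0)\neq0$ at some point. By continuity both stay nonzero on an open rectangle, on which $x\mapsto p=f'(x)$ and $y\mapsto q=g'(y)$ are diffeomorphisms onto an open rectangle of the $(p,q)$-plane; there the identity above reads $\phi(p)\psi(q)=H(p,q)$ with $\phi=f''$ and $\psi=g''$ nowhere zero, so in particular $H\neq0$. Because the left-hand side factors as a function of $p$ times a function of $q$, one has $\partial_p\partial_q\log|H|=0$, i.e.\ the separability identity
\[
H\,H_{pq}=H_p\,H_q
\]
holds on the rectangle, hence identically in $(p,q)$. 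Once this identity is shown to be impossible, no such point $(x_0,y_0)$ exists; therefore $f''(x_0)\neq0$ for some $x_0$ forces $g''\equiv0$ everywhere (and symmetrically), so $\Sigma$ is cylindrical.

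The main obstacle is the irrational factor $\sqrt W$ carried by $\langle N,\vec v\rangle$, which blocks a direct comparison of polynomial coefficients in the separability identity. The device I would use is that $\sqrt W\notin\r(p,q)$, so every quantity built from $H$ splits uniquely as $(\text{rational})+\sqrt W\,(\text{rational})$, and in any identity these two parts must vanish separately. Computing $H_p,H_q,H_{pq}$ in this form and collecting the two parts of $H\,H_{pq}-H_pH_q$, one finds that the $\sqrt W$-part equals $\lambda W\bigl(W(v_2p+v_1q)-13\,pqB\bigr)$ and the rational part equals $-W\bigl(8\lambda^2pqW+6\,pqB^2+v_1v_2W^2\bigr)$, where $B:=v_3-v_1p-v_2q$.

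It then remains to read off the contradiction. If $\lambda\neq0$, the $\sqrt W$-part forces $W(v_2p+v_1q)-13\,pqB\equiv0$; the coefficients of $q^3$, $p^3$ and $pq$ in this polynomial are $v_1$, $v_2$ and $-13v_3$, so $\vec v=0$, contradicting $|\vec v|=1$. Hence $\lambda=0$, and the rational part reduces to $6\,pqB^2+v_1v_2W^2=0$; its $p^4$-coefficient gives $v_1v_2=0$, after which $6\,pqB^2\equiv0$ is impossible since $B\not\equiv0$. Either way a contradiction arises, so $\Sigma$ must be cylindrical.
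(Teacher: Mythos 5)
Your proof is correct — I checked the key computations: with $H=W^{3/2}B+\lambda W^{2}$, $B=v_3-v_1p-v_2q$, one indeed gets $HH_{pq}-H_pH_q=-W\bigl(8\lambda^{2}pqW+6pqB^{2}+v_1v_2W^{2}\bigr)+\sqrt{W}\,\lambda W\bigl(W(v_2p+v_1q)-13pqB\bigr)$, and the coefficient extractions ($v_1,v_2,-13v_3$ from $q^3,p^3,pq$ when $\lambda\neq0$; $v_1v_2$ from $p^4$ and then $B\equiv0$ when $\lambda=0$) do yield the contradiction with $|\vec v|=1$. But your route is genuinely different from the paper's. The paper isolates $f''=\frac{1}{g''}(W_1^{3/2}W_2+\lambda W_1^{2})$, differentiates in $y$ (which brings $g'''$ into play), removes the radical by squaring, and reads the result as a degree-$8$ polynomial in $f'$ with $y$-dependent coefficients $C_n(y)$, splitting into the cases $g'''\not\equiv0$ and $g'''\equiv0$; moreover it disposes of $\lambda=0$ by citing an earlier classification \cite{al}. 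You instead pass to the gradient variables $(p,q)=(f',g')$, encode separability by the logarithmic criterion $HH_{pq}=H_pH_q$ — which eliminates $f''$, $g''$ and all third derivatives in one stroke — and replace squaring by the unique splitting of $\r(p,q)(\sqrt W)$-elements into rational and $\sqrt W$-parts, so that two low-degree polynomial identities in $(p,q)$ must vanish separately. What this buys: a treatment symmetric in $x$ and $y$, no spurious degree inflation, a case split on $\lambda$ rather than on $g'''$, and a self-contained argument covering $\lambda=0$ without external input; what the paper's approach buys is that it needs nothing beyond differentiating the ODE-like relation and comparing coefficients, with no field-theoretic remark about $\sqrt{W_1}$. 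One small point you should make explicit: "holds on the rectangle, hence identically" requires the observation that if $A+\sqrt W\,B=0$ on an open set with $A,B$ polynomials, then $A^{2}=WB^{2}$ as polynomials, and irreducibility of $W=1+p^{2}+q^{2}$ (equivalently $\sqrt W\notin\r(p,q)$, the fact you invoke) forces $A\equiv B\equiv0$; with that sentence added, the argument is complete.
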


\begin{proof}
Let $\Sigma$ be a $\lambda$-translator of the form $z=f(x)+g(y)$.  With respect to the $(x,y,z)$ coordinates, let   $\vec{v}=(v_1,v_2,v_3)$ be the speed of flow. If $\lambda=0$, the result was proved in \cite[Theorem 5]{al}. 

We now consider the case $\lambda\not=0$. Assume, by contradiction, that $\Sigma$ is not a cylindrical surface. Then there exists $(x_0,y_0)\in I\times J$ such that $f''(x_0)\not=0$ and $g''(y_0)\not=0$. Therefore, we can assume $f''g''\not=0$ on a subset $\tilde{I} \times \tilde{J}$, where $\widetilde{I}$ and $\widetilde{J}$ are subintervals of $I$ and $J$ containing $x_0$ and $y_0$, respectively.  We restrict our study to $\tilde{I} \times \tilde{J}$. Eq. \eqref{eq1} can be expressed by 
\begin{equation}
f''g''W_1^{-2}=W_1^{-1/2}W_2+\lambda, \label{fg}
\end{equation}
where $W_1=1+f'^2+g'^2$ and $W_2=-v_1f'-v_2g'+v_3$. We multiply both sides of Eq. \eqref{fg} by $W_1^2/g''$, obtaining
$$
f''=\frac{1}{g''}(W_1^{3/2}W_2+\lambda W_1^2).
$$
Differentiating with respect to $y$,
\begin{equation}
0=-\frac{g'''}{g''^2}(W_1^{3/2}W_2+\lambda W_1^2)+\frac{1}{g''}(3g'g''W_1^{1/2}W_2-v_2g''W_1^{3/2}+4g'g''\lambda W_1).  \label{fg-1}
\end{equation}
This equation can be written in the form $A+B \sqrt{W_1}=0$, where $A$ and $B$ are functions of $g,g',g'',W_1,$ and $W_2$. The identity $ B^2 W_1-A^2=0$ is a polynomial in $f'$ of degree $8$, namely,  
\begin{equation}\label{final}
\sum_{n=0}^8C_n(y)f'(x)^n=0.
\end{equation}

Therefore, all coefficients $C_n$ must vanish. By the computations of $C_6,C_7$, and $C_8$, we obtain 
\begin{eqnarray*}
C_6&=&v_2^2-2\frac{g'g'''}{g''^2}(3v_1^2-4\lambda^2), \\
C_7&=& -2v_1v_2\frac{g'''}{g''^2}, \\
C_8&=&\left (\frac{g'''}{g''^2} \right)^2(v_1^2-\lambda^2)
\end{eqnarray*}
We distinguish two cases.
 \begin{enumerate}
 \item Case $g'''\not=0$ at some point $y_0$. We work around a subinterval $\widetilde{J}$ around $y_0$. Then, from $C_8=0$, we get $v_1^2=\lambda^2$; in particular this implies $v_1 \neq 0$. Hence, $C_7=0$ yields $v_2=0$. Substituting this in $C_6=0$ leads to the contradiction 
$$2\lambda^2\frac{g'g'''}{g''^2}=0.$$
\item Case $g'''=0$ identically. From $C_6=0$ we have $v_2=0$. Thus, for $5\leq n\leq 8$, the coefficients $C_n$ vanish trivially. Moreover, $C_4=g'^2(9v_1^2-16\lambda^2)$, which implies $v_1 \neq 0$; otherwise, from $C_4=0$, we would conclude $\lambda =0$. From the identity $C_3=18g'^2v_1v_3$, it follows $v_3=0$. Finally, we arrive at the contradiction $C_0=-16\lambda^2(1+g'^2)^2=0$.
 \end{enumerate}
\end{proof}

\section{Rotational  $\lambda$-translators} \label{s3}
 
In the next two sections, we will study the $\lambda$-translators of rotational type, obtaining geometric properties of these surfaces. In this section, we   study the relationship between the rotation axis and the speed $\vec{v}$ of the  $\lambda$-translator and we also consider the problem of existence of rotational $\lambda$-translators intersecting orthogonally the rotation axis.

The first result proves that    the rotation axis and $\vec{v}$ are parallel.

\begin{proposition} \label{pr-ax}
Let $\Sigma$ be a   rotational surface. If $\Sigma$ is a $\lambda$-translator, then $\Sigma$ is a plane or its rotation axis is parallel to the speed $\vec{v}$.
\end{proposition}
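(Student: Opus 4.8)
The plan is to work with an explicit parametrization and to exploit the fact that the Gauss curvature of a rotational surface depends only on the profile parameter. After a rigid motion (which preserves Eq.~\eqref{eq1}), I would place the rotation axis along the $z$-axis and write $\Sigma$ as $\Psi(u,\theta)=(r(u)\cos\theta,\, r(u)\sin\theta,\, h(u))$, where the profile curve $u\mapsto(r(u),0,h(u))$ lies in the $xz$-plane and $r(u)>0$ on the regular part of $\Sigma$; set $\vec{v}=(v_1,v_2,v_3)$. A direct computation of $\Psi_u\times\Psi_\theta$ then gives the unit normal $N=\frac{1}{\sqrt{r'^2+h'^2}}\,(-h'\cos\theta,\,-h'\sin\theta,\,r')$ (taking, if convenient, the profile curve parametrized by arc-length so that $r'^2+h'^2=1$).

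The key observation is that, since $\Sigma$ is invariant under the group of rotations about the $z$-axis and $K$ is preserved by rigid motions, the Gauss curvature $K=K(u)$ carries no $\theta$-dependence. Consequently, in the $\lambda$-translator equation $K=\langle N,\vec{v}\rangle+\lambda$, the right-hand side must also be independent of $\theta$. Computing the inner product yields $\langle N,\vec{v}\rangle=\frac{1}{\sqrt{r'^2+h'^2}}\,(-h'(v_1\cos\theta+v_2\sin\theta)+r'v_3)$, whose only $\theta$-dependent term is proportional to $h'(u)\,(v_1\cos\theta+v_2\sin\theta)$.

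From here the argument closes quickly. The function $\theta\mapsto v_1\cos\theta+v_2\sin\theta$ is non-constant precisely when $(v_1,v_2)\neq(0,0)$, since $1$, $\cos\theta$, $\sin\theta$ are linearly independent. Hence, for the right-hand side to be $\theta$-independent, either $v_1=v_2=0$, in which case $\vec{v}=(0,0,v_3)$ is parallel to the rotation axis and we are done, or $(v_1,v_2)\neq(0,0)$, in which case the equation forces $h'(u)=0$ for every $u$ in the domain. In the latter case $h$ is constant, the profile curve lies in a horizontal line $z=\text{const}$, and $\Sigma$ is contained in a plane orthogonal to the axis, giving the first alternative of the statement.

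The step requiring the most care is the clean separation of variables: one must justify that $K$ genuinely has no $\theta$-dependence (immediate from rotational symmetry, or verifiable from the explicit formula for $K$ in terms of $r,h$ and their derivatives) and then match coefficients using the linear independence of $1,\cos\theta,\sin\theta$. A secondary subtlety is restricting throughout to the regular set where $r>0$ and the parametrization is valid, and confirming that the conclusion $h'\equiv0$ holds on the whole interval rather than at isolated points; this is automatic once $(v_1,v_2)=(0,0)$ has been excluded, since the dichotomy then holds pointwise in $u$.
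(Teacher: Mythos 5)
Your proof is correct and follows essentially the same route as the paper: both place the axis on the $z$-axis, compute the unit normal of the rotational parametrization, and use the linear independence of $1,\cos\theta,\sin\theta$ to force either $v_1=v_2=0$ (axis parallel to $\vec{v}$) or $h'\equiv 0$ (horizontal plane). Your only cosmetic variation is invoking the rotational invariance of $K$ to skip its explicit computation before matching trigonometric coefficients, which changes nothing essential.
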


\begin{proof}
 After a rigid motion of $\r^3$, we   can assume that the rotation axis $L$ coincides with the $z$-axis. Let $\gamma\colon I\to\r^3$ be the generating curve of $\Sigma$ which we assume to be parametrized by arc-length,  $\gamma(s)=(x(s),0,z(s))$, $s\in I$, $x(s)>0$. The surface $\Sigma$ can be parametrized by  
\begin{equation}\label{para}
\Psi(s,t)=(x(s)\cos t,x(s)\sin t,z(s)), \quad s\in I, t\in \r.
\end{equation}
 The unit normal vector of $\Sigma$ is $N=(-z'\cos t,-z'\sin t,x')$, and the Gauss curvature   is $K=\frac{z'}{x}\kappa$, where $\kappa$ is the curvature of $\gamma$. If the coordinates of  the speed $\vec{v}$ with respect to $(x,y,z)$ are $\vec{v}=(v_1,v_2,v_3)$, then Eq. \eqref{eq1}   becomes
\begin{equation*}
-v_1z'(s)\cos t-v_2z'(s)\sin t+v_3x'(s)-\frac{z'(s)}{x(s)}\kappa(s)+\lambda=0.
\end{equation*}
Since the functions $\{1,\sin t,\cos t\}$ are linearly independent, their coefficients, which are functions on the $s$ variable, must vanish identically in the interval $I$. Therefore  
\begin{equation}
\begin{split}
0&=v_1z', \\
0&=v_2z', \\
0&=v_3x'-\frac{z'}{x}\kappa+\lambda.
\end{split}  \label{kap}
\end{equation}
We distinguish two cases.
\begin{enumerate}
\item There exists $s_1\in I$ such that $z'(s_1)\not=0$. Then the first two equations of \eqref{kap} imply $v_1=v_2=0$. This gives $\vec{v}=(0,0,\pm 1)$. Thus $\vec{v}$ is parallel to the $z$-axis,  proving the result.
\item Case $z'=0$ identically in $I$. Then $\gamma$ is a horizontal line. This proves that $\Sigma$ is a horizontal plane, completing the proof.  
\end{enumerate}
\end{proof}

 From now, we will assume that the speed of the Gauss curvature flow is $\vec{v}=(0,0,1)$ and in consequence the rotation axis is the $z$-axis.  Following with the notation of Prop. \ref{pr-ax}, since    $\gamma$ is parametrized by arc-length, there is a smooth function $\theta=\theta(s)$ such that   $x'(s)=\cos\theta(s)$ and  $z'(s)=\sin\theta(s)$. In particular, $\kappa(s)=\theta'(s)$. From   the last equation of \eqref{kap}, a rotational surface $\Sigma$ parametrized by \eqref{para}  is a $\lambda$-translator   if and only if the functions $x(s)$ and $z(s)$ satisfy
\begin{equation}\label{trig}
\left\{
\begin{split}
x'(s)&=\cos\theta(s) \\ 
z'(s)&=\sin\theta(s) \\
\theta'(s) &=x(s)\frac{\cos\theta(s)}{\sin\theta(s)}+\lambda \frac{x(s)}{\sin\theta (s)}.
\end{split}\right.
\end{equation}
 
In the next result, we investigate the  rotational $\lambda$-translators  whose generating curve is  a straight-line.

\begin{proposition} \label{pr-str}
Any rotational surface generated by straight lines is a $\lambda$-translator. In particular, they are: horizontal planes ($|\lambda|=1$), rotational cones ($0<|\lambda|<1$) and circular cylinders ($\lambda=0$).
\end{proposition}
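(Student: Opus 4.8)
The plan is to exploit the structural system \eqref{trig} together with the simple observation that a straight generating line forces the angle function $\theta$ to be constant. Since the generating curve $\gamma$ is a straight-line, its curvature vanishes, $\kappa\equiv 0$, and hence the Gauss curvature $K=\frac{z'}{x}\kappa$ is identically zero. Consequently Eq. \eqref{eq1} collapses to $\langle N,\vec{v}\rangle+\lambda=0$, which in the notation introduced after Prop. \ref{pr-ax} (with $\vec{v}=(0,0,1)$ parallel to the axis) reads $x'(s)+\lambda=0$, that is $\cos\theta(s)=-\lambda$.

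Next I would note that for a straight-line $\gamma$ lying in the half-plane $\{x>0\}$ the angle $\theta$ is constant in $s$, since the line meets the fixed direction of the $z$-axis at a fixed angle; thus $x'=\cos\theta$ and $z'=\sin\theta$ are both constant. Setting $\lambda:=-\cos\theta$, the identity $\cos\theta=-\lambda$ holds for every $s\in I$, and therefore $\Sigma$ is automatically a $\lambda$-translator with this value of $\lambda$. This proves the first assertion, and no differential equation needs to be integrated: once $\kappa=0$ and $x'$ is constant, the third equation of \eqref{trig} is satisfied trivially.

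Finally I would read off the classification from the value of $\theta$, splitting into three cases. If $\theta\in\{0,\pi\}$ then $z'=0$, the line is horizontal and sweeps out a horizontal plane, with $|\lambda|=|\cos\theta|=1$. If $\theta=\pi/2$ then $x'=0$, the line is parallel to the axis and generates a circular cylinder, with $\lambda=-\cos\theta=0$. In the remaining case $\cos\theta\neq 0$ and $\sin\theta\neq 0$, so $|\lambda|=|\cos\theta|\in(0,1)$; here the extended line meets the $z$-axis and its rotation about the axis produces a rotational cone.

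I do not anticipate a serious obstacle, since the analytic content reduces to the single identity $\cos\theta=-\lambda$. The only point that requires care is the geometric identification in the slanted case: one must verify that a non-vertical, non-horizontal line in the half-plane $\{x>0\}$ really generates a cone and pin down the range $0<|\lambda|<1$. This amounts to checking that such a line, for which $x'=\cos\theta\neq 0$ makes $x$ strictly monotone, attains the value $x=0$ when extended, so that the rotated surface has a genuine vertex on the axis.
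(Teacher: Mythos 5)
Your proof is correct and follows essentially the same route as the paper: a straight generating line forces $\theta$ to be constant and $K=0$, so Eq. \eqref{eq1} reduces to $\cos\theta_0=-\lambda$, and the cases $|\cos\theta_0|=1$, $\cos\theta_0=0$, and $0<|\cos\theta_0|<1$ give the horizontal plane, the circular cylinder, and the cone, respectively. One small imprecision: your side remark that the third equation of \eqref{trig} is ``satisfied trivially'' fails for horizontal lines, where $\sin\theta_0=0$ makes that equation undefined (this is exactly why the paper treats the plane case separately); your direct use of \eqref{eq1} with $\langle N,\vec{v}\rangle=x'=\cos\theta_0$ sidesteps the issue, so the argument stands.
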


\begin{proof} Let $\Sigma$ be a rotational surface about the $z$-axis and whose generating curve is a straight-line $\gamma$. Since $\theta'=\kappa=0$, the function $\theta$ is   constant, $\theta(s)=\theta_0$. If $\gamma$ is a horizontal straight-line, then $\Sigma$ is a plane and we know that $\Sigma$ is a $\lambda$-translator with $\lambda=\pm 1$. If $\gamma$ is not a horizontal plane, then  $\sin\theta_0\not=0$. Then \eqref{trig} proves that   $\Sigma$ is a $\lambda$-translator for the choice of $\lambda$ given by  $\lambda=-\cos\theta_0$. 
 \end{proof}

We point out that planes and circular cylinders have constant mean curvature. In \cite{lo3}, the second author has proved that these surfaces are the only $\lambda$-translators for the Gauss curvature flow with constant mean curvature.  
 
 In the second part of this section, we investigate those rotational $\lambda$-translators that intersect the rotation axis orthogonally.    If the intersection with the $z$-axis occurs at $s=0$, the orthogonality condition is equivalent to  the initial condition $\theta(0)=0$ in the system \eqref{trig}. Consequently, $\sin\theta(0)=0$ and $x(0)=0$, which implies that the third equation of \eqref{trig}  is not well-defined at $s=0$.

In order to prove the existence of \eqref{trig}, it is convenient to reparametrize the surface $\Sigma$. Since we require that $\gamma$ meets the $z$-axis orthogonally, $\gamma$ must be a graph on the $x$-axis.  By using radial coordinates for $\Sigma$, $x=r$ and $z=u(r)$, we have   $\gamma(r)=(r,0,u(r))$, and the parametrization of $\Sigma$ is $(r,t)\mapsto (r\cos t,r\sin t,u(r))$, where $u$ is defined on some interval of $(0,\infty)$. With this parametrization, Eq.  \eqref{trig} becomes:
\begin{equation}\label{r-01}
\dfrac{u'(r)u''(r)}{r(1+u'(r)^2)^{2}}=\dfrac{1}{\sqrt{%
1+u'^2(r)}}+\lambda. 
\end{equation}
This equation is singular at $r=0$ and standard  ODE theory cannot directly apply. The orthogonality condition with the $z$-axis is expressed by $u'(0)=0$. After a vertical translation, we can assume that the intersection point between  $\gamma$  and the $z$-axis occurs at the origin. To address the singularity at $r=0$ we multiply \eqref{r-01} by $2r$ and consider the initial value problem
\begin{equation} \label{r-0}
\left\{ 
\begin{split}
\left (\dfrac{u'(r)^2}{1+u'(r)^2}\right )'&=\frac{2r}{\sqrt{%
1+u'(r)^2}}+2\lambda r, \quad \mbox{in } (r_0,r_0+\delta) \\ 
u(r_0)&=0, \quad u'(r_0)=0,%
\end{split}
\right. 
\end{equation}
where $r_0>0$.    When $r_0\not=0$, the standard theory of ODEs assures the existence of solutions of \eqref{r-0} for some $\delta>0$. In the following result, we establish the existence of solutions of \eqref{r-0} in the case $r_0=0$, for some $\delta>0$.

\begin{theorem} \label{t1}
For $r_0=0$, the solvability of \eqref{r-0} for some $\delta>0$ is the following:
\begin{enumerate}
\item If
$\lambda<-1$, then there are no solutions.
\item If $\lambda=-1$, the solution is $u(r)=0$.
\item If $\lambda>-1$, there is $R>0$ such that the initial value problem \eqref{r-0} has a solution in the interval $[0,R]$. 
\end{enumerate}
\end{theorem}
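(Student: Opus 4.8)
The plan is to convert the singular initial value problem \eqref{r-0} into a regular integral equation through the substitution
\[
G(r)=\frac{u'(r)^2}{1+u'(r)^2}=1-\frac{1}{1+u'(r)^2},
\]
which takes values in $[0,1)$ and records the orthogonality condition as $G(0)=0$. The left-hand side of \eqref{r-0} is precisely $G'(r)$, while $\frac{1}{\sqrt{1+u'^2}}=\sqrt{1-G}$. Integrating \eqref{r-0} from $0$ to $r$ and using $u'(0)=0$ recasts the problem as the fixed-point equation
\[
G(r)=\int_0^r 2s\,\sqrt{1-G(s)}\,ds+\lambda r^2=:T[G](r).
\]
Conversely, any continuous $G$ with $0\le G<1$ solving $G=T[G]$ is smooth by bootstrapping, and $u(r)=\int_0^r\sqrt{G(s)/(1-G(s))}\,ds$ then solves \eqref{r-0}. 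Thus everything reduces to analysing $T$.

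For the non-existence and rigidity cases I would use only the elementary bound $\sqrt{1-G}\le 1$, valid since $G\ge 0$. Any solution then satisfies $G(r)\le\int_0^r 2s\,ds+\lambda r^2=(1+\lambda)r^2$. If $\lambda<-1$, the right-hand side is negative for $r>0$, contradicting $G\ge 0$, so no solution exists. If $\lambda=-1$, the same bound forces $G\equiv 0$, hence $u'\equiv 0$, and with $u(0)=0$ this yields the unique solution $u\equiv 0$.

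For $\lambda>-1$ I would prove existence by the contraction mapping principle applied to $T$ on the closed set $X=\{G\in C[0,R]:G(0)=0,\ 0\le G\le M\}$. I choose $M\in(0,1)$ with $M\le 1-\lambda^2$ when $\lambda<0$ (possible precisely because $\lambda>-1$), so that the lower bound $T[G](r)\ge r^2(\sqrt{1-M}+\lambda)\ge 0$ keeps $T(X)$ nonnegative, while $T[G](r)\le(1+|\lambda|)R^2\le M$ for $R$ small keeps $T[G]\le M$. The estimate $|\sqrt{1-G_1}-\sqrt{1-G_2}|\le\frac{1}{2\sqrt{1-M}}|G_1-G_2|$ then gives $\|T[G_1]-T[G_2]\|_\infty\le\frac{R^2}{2\sqrt{1-M}}\|G_1-G_2\|_\infty$, a contraction once $R$ is small. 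Banach's theorem produces the desired $G$, hence $u$ on $[0,R]$.

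The main obstacle is the existence case $\lambda>-1$: the delicate point is to tune $M$ and $R$ simultaneously so that $T$ is both a self-map of $X$ (the lower bound $G\ge 0$ is what fails if $M$ is too large when $\lambda<0$) and a contraction, and then to upgrade the fixed point $G$, a priori only continuous, to a smooth $u$ solving \eqref{r-0}, including the correct behaviour $u'(r)\sim\sqrt{1+\lambda}\,r$ at the singular point $r=0$. By contrast, the non-existence and rigidity statements follow at once from the single inequality $G(r)\le(1+\lambda)r^2$.
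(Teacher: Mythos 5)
Your proposal is correct, and it takes a genuinely different route from the paper's. Both proofs of part (3) rest on the Banach fixed point theorem, but the paper works with the unknown $u$ itself: its operator $(\t u)(r)=\int_0^r f^{-1}\left(\sqrt{\int_0^s tg(u'(t))\,dt}\right)ds$ acts on a closed ball of $C^1([0,R])$, and its contraction estimate is the delicate step, because the denominators $\sqrt{\int_0^s tg(u')\,dt}+\sqrt{\int_0^s tg(w')\,dt}$ degenerate as $s\to 0$; the paper controls them via the expansion $\int_0^s tg(u')\,dt=(1+\lambda)s^2+o(s^2)$ and a lower bound $4\sqrt{1+\lambda}+o(1)\ge C(\epsilon,\lambda)$. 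Your substitution $G=u'^2/(1+u'^2)$ removes this difficulty at the source: the map $T[G](r)=\int_0^r 2s\sqrt{1-G(s)}\,ds+\lambda r^2$ acts on continuous functions with $0\le G\le M<1$, the relevant denominator $\sqrt{1-G_1}+\sqrt{1-G_2}\ge 2\sqrt{1-M}$ is uniformly bounded below, and the contraction constant $R^2/(2\sqrt{1-M})$ comes out in one line, in the $C^0$ norm rather than the $C^1$ norm. Parts (1) and (2) differ as well: the paper uses L'H\^{o}pital's rule (tacitly assuming a presumed solution is $C^2$ up to $r=0$), while your inequality $0\le G(r)\le(1+\lambda)r^2$ uses only the integrated equation and, in case (2), additionally gives uniqueness of $u\equiv 0$, which the paper does not address. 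What the paper's formulation buys is that the fixed point is directly $u$, in the form \eqref{u3} that is quoted later (Remark \ref{r1}, Theorem \ref{t2}); in your setting the analogous facts must be recovered from $G$, and this is the only place requiring care: one must bootstrap the fixed point $G$ to $C^1$ regularity (so that the equation in \eqref{r-0}, whose left-hand side is exactly $G'$, holds), check $G(r)>0$ for small $r>0$ so that $u'=\sqrt{G/(1-G)}$ is differentiable at $r=0$ with $u''(0)=\sqrt{1+\lambda}$, and observe that the branch $u'=-\sqrt{G/(1-G)}$ yields the symmetric solution of Remark \ref{r1}. You flag all of these points, and they are routine to complete.
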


\begin{proof}
Suppose that \eqref{r-0} has a solution for $r_0=0$. By taking the limit as $r\to 0$ and applying the L'H\^{o}pital's rule in \eqref{r-01}, we obtain
$$u''(0)^2= 1+\lambda.$$
This implies that $\lambda$ must satisfy the inequality  $\lambda\geq -1$. 

If $\lambda=-1$, then it is trivial that $u(r)=0$ is a solution. 
Therefore, it remains to prove the existence of solutions when $\lambda>-1$. Define the functions   $f\colon\r\to (-1,1)$ and $g:  \r \to  \r$ by
$$
f(x)=\frac{x}{\sqrt{1+x^2}}, \quad g(x)=\frac{2}{\sqrt{1+x^2}}+2\lambda.
$$
Note that the positivity of $g$ depends on the value of $\lambda$. For example, when $\lambda \geq 0$, $g$ is always positive. However, if $\lambda \in (-1,0)$, then $g$ remains positive for every $x \in (\frac{\sqrt{1-\lambda^2}}{\lambda},\frac{\sqrt{1-\lambda^2}}{-\lambda})$.

In terms of the functions $f$ and $g$, the equation in \eqref{r-0} can be written as $(f(u')^2)'=rg(u')$. From this identity   we arrive at
\begin{equation}\label{u3}
u(r)= \int_0^r f^{-1} \left (\sqrt{\int_0^s tg(u'(t))dt }\right)ds,
\end{equation}
where $f^{-1}$ is the inverse of $f$, defined by 
$$f^{-1}\colon (-1,1)\to\r, \quad f^{-1}(y)=\frac{y}{\sqrt{1-y^2}}.$$
Define the operator $\t$ by
$$
(\t u)(r)=\int_0^r f^{-1} \left (\sqrt{\int_0^s tg(u'(t))dt }\right)ds.
$$
It is clear that a $C^2$ function $u$ is a solution of \eqref{r-0} if $u$ is a fixed point of the operator $\t$. The proof of the theorem consists of showing that $\t$ is a contraction map on the Banach space $C^1([0,R])$   endowed with the standard norm $\Vert u \Vert = \Vert u \Vert_\infty + \Vert u' \Vert_\infty$. By applying the Banach fixed point theorem, the existence of a fixed point  guarantees the desired solution. For this, we will prove that $\t$ is a   contraction on a closed ball $\overline{\mathcal{B}(0,\epsilon )} \subset C^1([0,R])$, for some $\epsilon,R>0$, which   will be fixed later.    

Notice that as long as $\lambda \geq 0$, $\t$ is well-defined because the radicand in its definition is always positive. If $\lambda \in (-1,0)$ we can ensure that the radicand remains positive by choosing a suitable upper bound for $\epsilon$; for example we may require $\epsilon < \frac{\sqrt{1-\lambda^2}}{-\lambda}$. Therefore, $ |u' | <\epsilon$ because $\Vert u \Vert < \epsilon$.

Let $M=2+2\lambda>0$, which it is positive because $\lambda>-1$. Then  $g(x)\leq M$ for all $x\in\r$. Fix $R>0$ such that $R\leq\frac{1}{\sqrt{M}}$. Then we have
$$\int_0^stg(u'(t))\, dt\leq M\int_0^st\, dt=M\frac{s^2}{2}\leq M\frac{R^2}{2}\leq\frac12.$$
Thus, we can apply $f^{-1}$. 

We prove that $\t$ is a self-map on the closed ball $\overline{\mathcal{B}(0,\epsilon )}$ for some $\epsilon >0$. Second, we demonstrate that $\t$ is a contraction. Fix $\epsilon>0$, with $\epsilon<1$. In case that $\lambda\in (-1,0)$, we also assume $\epsilon < \frac{\sqrt{1-\lambda^2}}{-\lambda}$. The value of $R$ will be determined later.

\begin{enumerate}

\item We prove that $\t(\overline{\mathcal{B}(0,\epsilon )}) \subset \overline{\mathcal{B}(0,\epsilon )}$ where $R$ is given by  
\begin{equation}
R<\min  \{ \frac{1}{ \sqrt{M}},\frac{\epsilon}{2},\frac{\sqrt{2}\epsilon}{\sqrt{M(4+\epsilon^2)}}\}.  \label{RM-eps}
\end{equation}
Let $u\in  \overline{\mathcal{B}(0,\epsilon )}$. We know that $\int_0^s tg(u'(t))\, dt\leq 1/2$. By using that $f^{-1}$ is an increasing function,  we have
$$
|(\t u)(r)|\leq  \left| \int_0^r f^{-1} \left ( \frac{1}{\sqrt{2}}\right)\,ds \right|=\int_0^r ds=r \leq R\leq\frac{\epsilon}{2}.
$$
Similarly, we have $\int_0^s tg(u'(t))\, dt\leq \int_0^s Mt\, ds=Ms^2/2$. Thus 
\begin{equation*}
|(\t u)'(r)|\leq f^{-1}\left(\frac{s\sqrt{M}}{\sqrt{2}}\right)\leq f^{-1}\left(\frac{R\sqrt{M}}{\sqrt{2}}\right)\leq\frac{\epsilon}{2},
\end{equation*}
where we have used that $f^{-1}(R\sqrt{M/2})\leq\epsilon/2$ thanks to \eqref{RM-eps}.
Definitively, we have proved $\Vert \t u\Vert =|\Vert \t u\Vert_\infty+\Vert(\t u)'\Vert_\infty\leq\epsilon$.
 
\item We proceed in showing that $\t: \overline{\mathcal{B}(0,\epsilon )} \to \overline{\mathcal{B}(0,\epsilon )}$ is a contraction. For this, we aim to establish the existence of a positive constant $\mu$, with $\mu<1$, such that
$$
\Vert \t u - \t w \Vert \leq \mu \Vert  u - w \Vert ,
$$
for all $u,w \in \overline{\mathcal{B}(0,\epsilon )}$. Let $L_{f^{-1}}$,  $L_g $   denote the Lipschitz constants of $f^{-1}$ and $g$ respectively when these functions are defined in the interval $[-\epsilon,\epsilon]$. We are going to bound each of the two summands in  
$$
\Vert \t u - \t w \Vert = \Vert \t u - \t w \Vert_\infty + \Vert (\t u) - (\t w)' \Vert_\infty .
$$
Let $u,w\in \overline{\mathcal{B}(0,\epsilon )}\subset C^1([0,R])$.   For all $r \in [0,R]$, where $R$ will be changed later, we have
\begin{equation*}
\begin{split}
|(\t u) (r)- (\t w)(r)| &\leq L_{f^{-1}}  \int_0^r\left|\sqrt{ \int_0^s  t  g(u') \, dt}- \sqrt{ \int_0^s  t  g(w') \, dt}\right| ds  \\
&\leq L_{f^{-1}}\int_0^r \frac{\int_0^st|g(u')-g(w')|\, ds}{\sqrt{ \int_0^s  t  g(u') \, dt}+ \sqrt{ \int_0^s  t  g(w') \, dt}}\, ds   \\
&\leq L_{f^{-1}} L_g \Vert u-w  \Vert  \int_0^r \frac{s^2/2}{\sqrt{ \int_0^s  t  g(u') \, dt}+ \sqrt{ \int_0^s  t  g(w') \, dt}}\, ds.
\end{split}
\end{equation*}
Using the Taylor expansion of the function $g$, we have 
$$\int_0^s tg(u')\, dt=   (1+\lambda)s^2+o(s^2)$$
in $[-\epsilon,\epsilon]$. 
Thus, we have 
\begin{equation*}
\begin{split}
|(\t u) (r)- (\t w)(r)| & \leq L_{f^{-1}} L_g \Vert u-w  \Vert  \int_0^r \frac{s}{4\sqrt{1+\lambda}+o(1) }\, ds.
\end{split}
\end{equation*}
From \eqref{RM-eps}, we know that  $R<\epsilon/2$. By taking $\epsilon$ sufficiently small, there is a positive constant $C=C(\epsilon,\lambda)>0$ such that $4\sqrt{1+\lambda}+o(1)\geq C(\epsilon,\lambda)$. Then
\begin{equation}\label{ss1}
|(\t u) (r)- (\t w)(r)|  \leq \frac{1}{C} L_{f^{-1}} L_g \Vert u-w  \Vert \int_0^r s\, ds\leq \frac{R^2}{2C}L_{f^{-1}} L_g \Vert u-w  \Vert.
\end{equation}
Similarly, we derive 
\begin{equation}\label{ss2}
\begin{split}
|(\t u)' (r)- (\t w)'(r)| &\leq L_{f^{-1}}  \left|\sqrt{ \int_0^r  t  g(u') \, dt}- \sqrt{ \int_0^r  t  g(w') \, dt}\right|    \\
&= L_{f^{-1}}  \frac{\int_0^rt|g(u')-g(w')|\, ds}{\sqrt{ \int_0^r  t  g(u') \, dt}+ \sqrt{ \int_0^r  t  g(w') \, dt}}    \\
&\leq L_{f^{-1}} L_g \Vert u-w  \Vert    \frac{s^2/2}{\sqrt{ \int_0^s  t  g(u') \, dt}+ \sqrt{ \int_0^s  t  g(w') \, dt}}\\
&\leq\frac{R}{C}L_{f^{-1}} L_g \Vert u-w  \Vert.
\end{split}
\end{equation}
After choosing $R$ to satisfy the condition \eqref{RM-eps}, together with
$$
R\leq \min \left \{ \frac{\sqrt{C}}{\sqrt{2L_{f^{-1}} L_g}} , \frac{C}{4 L_{f^{-1}} L_g}  \right \},
$$
we obtain from \eqref{ss1} and \eqref{ss2}
$$
\Vert \t u - \t w \Vert \leq \frac12 \Vert  u - w \Vert ,
$$
which proves that $\t$ is a contraction on $C^1$([0,R]).
\end{enumerate}
To complete the proof, we show that the solution $u$ extends with $C^2$-regularity at $r=0$. This was proved at the beginning of the proof. Indeed, by taking the limit as $r\to 0$ and applying L'Hôpital's rule to Eq. \eqref{r-01}, we obtain
$$
u''(0)^2=1+\lambda,
$$
where $1+\lambda >0$ because $\lambda>-1$.  By the definition of $u$ in \eqref{u3},  the function $u$ implies that $u''(0)=\sqrt{1+\lambda}$.
\end{proof}

\begin{remark} \label{r1}
In \eqref{u3} we can also replace the inverse of $f^{-1}$ by $-f^{-1}$. In such a case, then  $u''(0)=-\sqrt{1+\lambda}$.  In fact,  it is not difficult to see that if $\bar{u}$ is the solution for Eq. \eqref{r-0} with $\bar{u}''(0)=-\sqrt{1+\lambda}$, then $\bar{u}(r)=-u(r)$ for all $r$, where $u$ is the solution with $u''(0)=\sqrt{1+\lambda}$.
\end{remark}
 

\section{Geometric properties of rotational $\lambda$-translators}\label{s4}

In this section we give a geometric description of the rotational $\lambda$-translators  that intersect orthogonally the rotation axis. In the final part of this section, we will also study the case that the surface does not meet the rotation axis. 

If the rotational $\lambda$-translator  meets orthogonally the rotation axis, we know by  Thm. \ref{t1} that  $\lambda\geq -1$. If $\lambda=-1$, the surface is a horizontal planes.  

 \begin{theorem}\label{t2}
Let $\lambda>-1$. Let $\Sigma$ be a rotational $\lambda$-translator with speed    $\vec{v}=(0,0,1)$ and parametrized by \eqref{para}. Suppose that $\gamma$ satisfies \eqref{trig} with initial conditions
\begin{equation}\label{trig2}
x(0)=z(0)=\theta(0)=0.
\end{equation}
Let   $[0,\omega)$ be the maximal domain of $\gamma$.  Up to a symmetry about the plane $z=0$, we have the following classification.
\begin{enumerate}
\item If $\lambda>0$  then $\omega<\infty$ and $\gamma$  intersects   the $z$-axis at $s=\omega$. This intersection is not orthogonal. Moreover,  $x(s)<\sqrt{2}$ for all $s\in I$.
\item If $\lambda=0$ then $\omega=\infty$. The curve $\gamma$ is asymptotic to the vertical line of equation $z=\sqrt{2}$. The function $z=z(s)$  is strictly increasing with $\lim_{s\to\infty}z(s)=\infty$.
\item If $\lambda\in (-1,0)$, then $\omega=\infty$. The curve $\gamma$ is an entire graph on $\r$ where $\lim_{s\to\infty}x(s)=\lim_{s\to\infty}z(s)=\infty$ and $\lim_{s\to\infty}\theta(s)=\cos^{-1}(-\lambda)$.   
 \end{enumerate}
In all cases, the surface is convex.
\end{theorem}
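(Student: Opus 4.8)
The plan is to read the third equation of \eqref{trig} as a statement about the Gauss curvature. Since $\vec v=(0,0,1)$ and $N=(-z'\cos t,-z'\sin t,x')$, one has $\langle N,\vec v\rangle=x'=\cos\theta$, so Eq.~\eqref{eq1} becomes $K=\cos\theta+\lambda$; combined with $K=\frac{z'}{x}\kappa=\frac{\sin\theta}{x}\theta'$ this recovers the third equation and shows that the two principal curvatures of $\Sigma$ are $\kappa_1=\theta'$ and $\kappa_2=\frac{\sin\theta}{x}$. The key algebraic step I would extract is an energy identity: writing $\theta'\sin\theta=-(\cos\theta)'$ and $x\cos\theta=xx'=\tfrac12(x^2)'$, the third equation reads $-(\cos\theta)'=\tfrac12(x^2)'+\lambda x$, and integrating from $0$ with \eqref{trig2} gives
\begin{equation*}
1-\cos\theta(s)=\tfrac12\,x(s)^2+\lambda\int_0^s x(\sigma)\,d\sigma .
\end{equation*}
As long as $x>0$ and $\theta\in(0,\pi)$ we have $\sin\theta>0$, so the sign of $\theta'$ equals the sign of $\cos\theta+\lambda$. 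When $|\lambda|<1$ I set $\theta^\ast=\cos^{-1}(-\lambda)\in(0,\pi)$, the unique angle where $\theta'$ changes sign. After the symmetry reduction of Remark \ref{r1} (which lets me take the branch with $u''(0)=+\sqrt{1+\lambda}$, i.e. $z\mapsto -z$), one has $\theta'(0)=\sqrt{1+\lambda}>0$, so $\theta$ starts increasing and can only approach $\theta^\ast$ monotonically from below, never crossing it.

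Convexity is then immediate in every case: monotonicity gives $\kappa_1=\theta'\ge 0$ while $\kappa_2=\sin\theta/x>0$, so both principal curvatures are nonnegative. For the two non-compact cases I would argue as follows. If $\lambda\in(-1,0)$ then $\theta^\ast\in(0,\pi/2)$ and $\theta\nearrow\theta^\ast$, so $x'=\cos\theta\ge\cos\theta^\ast=-\lambda>0$ forces $x$ to grow without bound; hence $\gamma$ is an entire graph over $[0,\infty)$ with $x,z\to\infty$ and $\theta\to\cos^{-1}(-\lambda)$. If $\lambda=0$ then $\theta^\ast=\pi/2$ and the energy identity reduces to $x^2=2(1-\cos\theta)$, so $x\to\sqrt2$ as $\theta\to\pi/2$, while $z'=\sin\theta\to 1$ gives $z\to\infty$; moreover the substitution $x=2\sin(\theta/2)$ turns the equation into the separable form $\theta'=\cos\theta/\cos(\theta/2)$, from which $\theta\to\pi/2$ only as $s\to\infty$. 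In both cases $\omega=\infty$, which I confirm by noting that the only singularity of \eqref{trig} is $\sin\theta=0$, and $\theta$ stays in a compact subinterval of $(0,\pi)$ for $s$ bounded away from $0$, precluding finite-time blow-up.

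The hard part will be the case $\lambda>0$, where $\theta^\ast\in(\pi/2,\pi)$. Here $\theta$ again increases toward $\theta^\ast$, and $x$ increases while $\theta<\pi/2$ and decreases afterward; evaluating the energy identity at the instant $\theta=\pi/2$ yields $\tfrac12 x^2=1-\lambda\int_0^s x<1$, so $x<\sqrt2$ at its maximum and hence everywhere. The crux is to show that $x$ actually returns to $0$ in finite $s$. I would argue by contradiction: if $x>0$ on all of $[0,\infty)$, then $\theta\nearrow\theta_\infty\le\theta^\ast$ with $\theta_\infty>\pi/2$ (since $\theta$ has already passed $\pi/2$ at a finite instant and keeps increasing), so $x'=\cos\theta\to\cos\theta_\infty<0$, which drives $x\to-\infty$ and contradicts $x>0$. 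Therefore $\omega<\infty$ and $x(\omega)=0$, i.e. $\gamma$ meets the axis at $s=\omega$; and because $x$ can only vanish after $\theta$ has exceeded $\pi/2$, we get $\theta(\omega)\in(\pi/2,\theta^\ast]\subset(\pi/2,\pi)$, so $z'(\omega)=\sin\theta(\omega)>0$ and the intersection is not orthogonal. The two delicate points I expect to need care are precisely this finite-time return to the axis (ruling out that $\theta$ limits to $\theta^\ast$ with $x$ bounded away from $0$) and the absence of blow-up before $x=0$; both follow from the monotonicity of $\theta$ together with the bound $\theta<\theta^\ast<\pi$, which keeps $\sin\theta$ away from $0$.
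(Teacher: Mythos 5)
Your proposal is correct in outline and, at the decisive point, takes a genuinely different route from the paper. Both proofs begin identically: the normalization $\theta'(0)=\sqrt{1+\lambda}$ via Remark \ref{r1} and the first integral $1-\cos\theta=\tfrac12 x^2+\lambda\int_0^s x\,dt$, which is the paper's \eqref{fi}. But for $\lambda>0$ the paper passes to the autonomous system \eqref{au} and argues through the linearization at the singular point $(0,\pi/2)$, whose purely imaginary eigenvalues $\pm\sqrt{-\lambda}$ are said to make it a center, so that the trajectory winds around it and recrosses $x=0$ at some $\theta_0\in(\pi/2,\pi)$. You instead use the cone angle $\theta^\ast=\cos^{-1}(-\lambda)$ as a barrier --- note the horizontal line $\theta=\theta^\ast$ is itself a trajectory of \eqref{au}, namely the cones of Prop.\ \ref{pr-str}, so ``never crossing'' deserves that one-line uniqueness (or barrier) justification --- conclude that $\theta$ is monotone and trapped in $(0,\theta^\ast)$, and then rule out $x>0$ for all time because $x'=\cos\theta\to\cos\theta_\infty<0$ would force $x\to-\infty$. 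This is more elementary and arguably more rigorous than the paper's center claim (purely imaginary eigenvalues of a linearization do not by themselves imply a center for the nonlinear system), and it gives the sharper localization $\theta(\omega)\in(\pi/2,\theta^\ast]$. Your $\lambda=0$ case is also self-contained (the substitution $x=2\sin(\theta/2)$, giving the separable equation $\theta'=\cos\theta/\cos(\theta/2)$, is a nice touch), where the paper simply cites \cite{al2}; and your argument for $\lim\theta=\cos^{-1}(-\lambda)$ when $\lambda\in(-1,0)$ replaces the paper's L'H\^opital computation on \eqref{fi}.

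There is, however, one genuine gap as written, and it sits exactly where the hypothesis $\lambda>0$ must be used: you assume, rather than prove, that $\theta$ reaches $\pi/2$ in finite time. You invoke this twice --- to evaluate \eqref{fi} at ``the instant $\theta=\pi/2$'' for the bound $x<\sqrt{2}$, and inside the contradiction argument (``$\theta$ has already passed $\pi/2$ at a finite instant'') to conclude $\theta_\infty>\pi/2$. Monotonicity together with the barrier $\theta<\theta^\ast$ cannot supply this step: for $\lambda=0$ both facts hold and $\theta$ never reaches $\pi/2$. The missing ingredient is precisely the paper's linear-growth estimate: while $\theta(s)\in(0,\pi/2)$ and $s\geq\bar s>0$, the third equation of \eqref{trig} gives $\theta'\geq \lambda x/\sin\theta\geq \lambda x(\bar s)>0$, since $x$ is nondecreasing on that range; hence $\theta$ grows at least linearly and must cross $\pi/2$. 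Inserting this two-line estimate (inside your contradiction frame it rules out $\theta_\infty\leq\pi/2$; and once $x$ is known to return to the axis, the existence of the instant $\theta=\pi/2$ follows because $x$ can only decrease after that crossing, which then legitimizes the $x<\sqrt{2}$ bound), and adding the analogous one-line argument that $\theta\to\theta^\ast$ when $\lambda\in(-1,0)$ (if $\theta_\infty<\theta^\ast$, then $\cos\theta+\lambda\geq\cos\theta_\infty+\lambda>0$ and $x\to\infty$ would force $\theta'\to\infty$), your proof is complete.
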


\begin{proof}

 By the L'H\^{o}pital rule we know that $\theta'(0)^2=1+\lambda$.  Then $\theta'(0)$ can take the value $\sqrt{1+\lambda}$ or $-\sqrt{1+\lambda}$. However we have seen in  Rem. \ref{r1} that the solutions for $\sqrt{1+\lambda}$ and $-\sqrt{1+\lambda}$ are symmetric with respect to each other about the $x$-axis.  Without loss of generality, we assume     $\theta'(0)=\sqrt{1+\lambda}$.  In such a case, the function $\theta(s)$ is increasing at $s=0$ and consequently, the functions $x(s)$ and $z(s)$ are also increasing at $s=0$.   

Multiplying the third equation of \eqref{trig} by $\sin\theta(s)$, we deduce a first integration. Indeed, it follows from \eqref{trig2} that
 \begin{equation}\label{fi}
1 -\cos\theta(s)=\frac12x(s)^2+\lambda\int_0^sx(t)\, dt.
\end{equation}
Notice that this identity can be also obtained from the first equation of \eqref{r-0}. 
\begin{enumerate}
\item Case $\lambda>0$.  We aim to prove that the function $\theta(s)$ crosses the value $\pi/2$. Assume that $\theta (s)\in (0,\pi/2)$; under this condition, the function $x(s)$ is increasing. Fix $\bar{s}>0$, chosen sufficiently close to zero. From \eqref{trig}, and provided $\theta(s)\in (0,\pi/2)$  and   $s>\bar{s}$, we have 
$$\theta'(s)\geq  \lambda\frac{x(s)}{\sin\theta(s)}\geq \lambda x(s) \geq\lambda x(\bar{s})>0.$$ This inequality proves that the growth of $\theta$ is linear when $\theta(s)\in (0,\pi/2)$. Consequently, there exists a first value $s_1$ such that $\theta(s_1)=\pi/2$. 

At $s=s_1$, we have $\theta(s_1)=\pi/2$, thus the function $x(s)$ attains a maximum at this point. Substituting into \eqref{fi}, we obtain
$$1=\frac12x(s_1)^2+\lambda\int_0^{s_1}x(t)\, dt>\frac12x(s_1)^2.$$
This inequality implies that $x(s_1)<\sqrt{2}$, and hence $x(s)\leq x(s_1)<\sqrt{2}$, for all $s\in [0,s_1]$.

It remains to prove that the solution curve $\gamma$  meets the rotation axis again. For this, we focus in the functions $(x,\theta)$ in the system \eqref{trig} and consider the autonomous system
\begin{equation}\label{au}
\left\{
\begin{split}
x'&=\cos\theta\\
\theta'&=\frac{x}{\sin\theta}(\cos\theta+\lambda).
\end{split}
\right.
\end{equation}
Here we are assuming $(x,\theta)\in \r\times (0,\pi)$ because of the denominator in $\theta'$. 
The singular point  of the system \eqref{au} is  $P=(0,\frac{\pi}{2})$.   The linearized system at the singular point $P=(0,\frac{\pi}{2})$ is 
$$\begin{pmatrix}0&-\sin\theta\\ \frac{\cos\theta+\lambda}{\sin\theta}&\frac{-1-\lambda\cos\theta}{\sin^2\theta}\end{pmatrix}(P)=\begin{pmatrix}0&-1 \\ \lambda &0\end{pmatrix}.$$
 Since the eigenvalues are pure complex numbers, namely $\pm\sqrt{-\lambda}$, then the point $P$ represents a center in the phase plane: see Fig. \ref{figphase}.  
\begin{figure}[hbtp]
\begin{center}
\includegraphics[width=.45\textwidth]{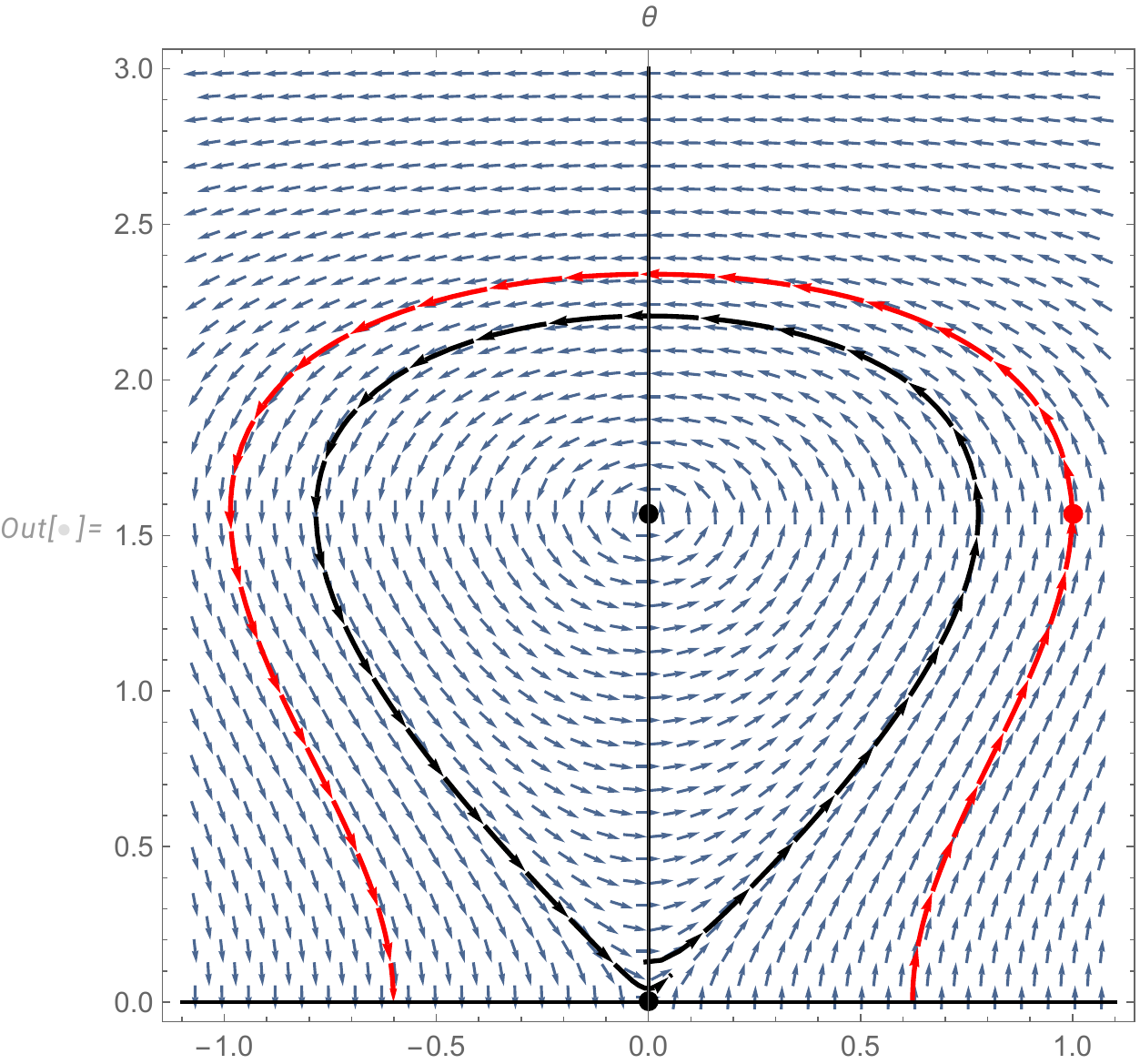}     \includegraphics[width=.45\textwidth]{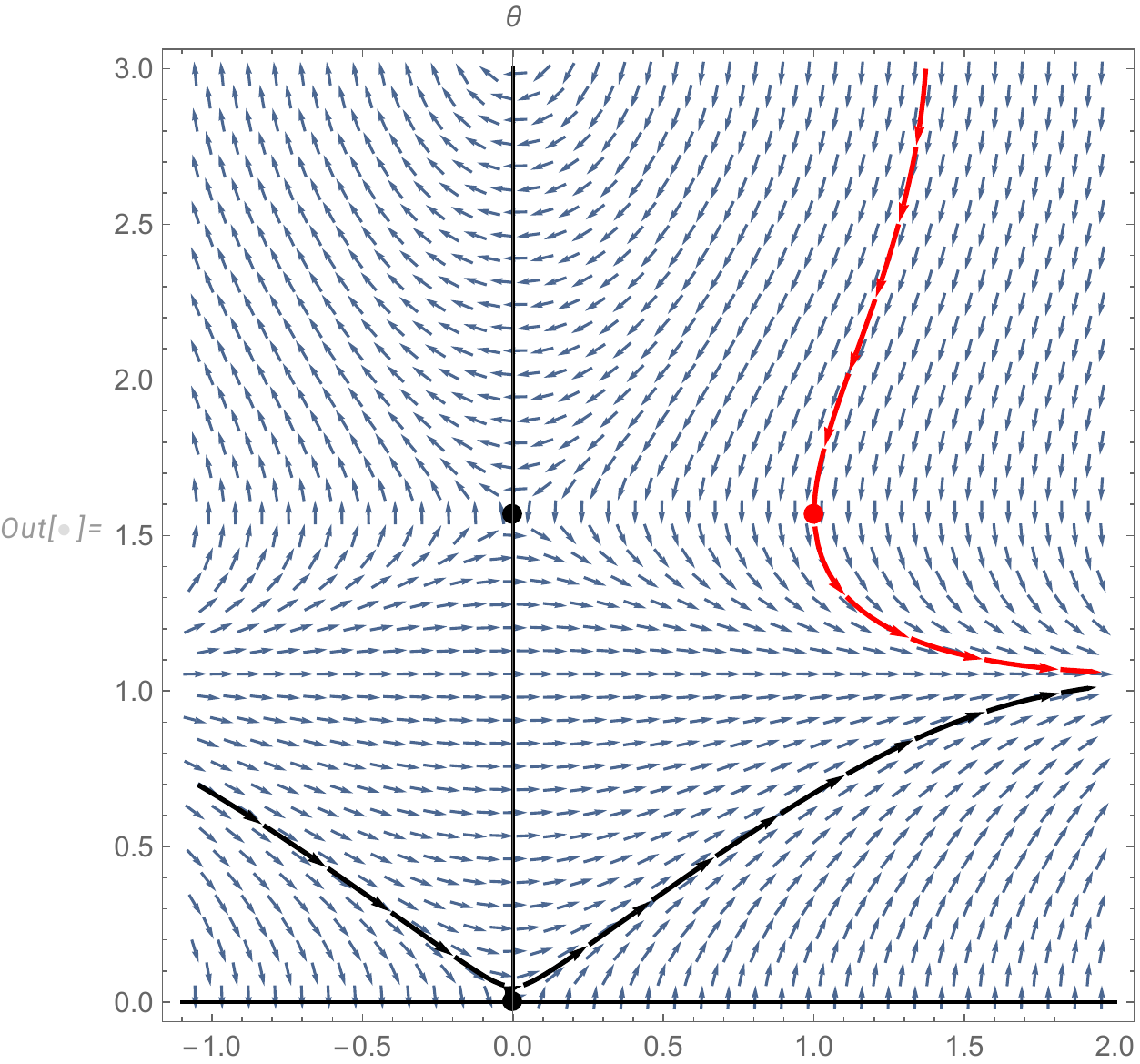} 
\end{center}
\caption{The phase plane of the system \eqref{au} with trajectories emanating from $(0,0)$ (black)  and $(1,\frac{\pi}{2})$ (red).The singular point is $(0,\frac{\pi}{2}) $. Left: case $\lambda>0$.  Right: case $\lambda\in (-1,0)$.   }\label{figphase}
\end{figure}
Since we are considering solutions of \eqref{trig} where $\theta'(0)>0$, the trajectory of \eqref{au} corresponding to the solution  \eqref{trig}-\eqref{trig2} moves on $\theta>0$. As we see in Fig. \ref{figphase}, this trajectory turns around the center without to arrive to the value $\theta=\pi$. Thus the trajectory intersects the axis $x=0$ in some value $\theta_0\in (\pi/2,\pi)$. This implies that the solution $\gamma$ of \eqref{trig}-\eqref{trig2} meets the $z$-axis at some point $s_1$ with  $\theta(s_1)\in (\pi/2,\pi)$. In particular, the maximal domain of existence of \eqref{trig}-\eqref{trig2} is finite and $\omega<\infty$.

\item Case $\lambda=0$. This situation is known and described in  \cite{al2}. From Eq. \eqref{fi}, the maximal domain is $[0,\infty)$ with $\lim_{s\to\infty}x(s)=\sqrt{2}$. The function $\theta$ has sign in its domain which implies that $\gamma$ is a convex curve.
\item Case $\lambda\in (-1,0)$. Instead to analyze the corresponding phase plane as in the case $\lambda>0$, the results can be obtained directly from \eqref{trig}. See the phase plane in Fig. \ref{figphase}. The function $\theta$ cannot attain the value $\pi/2$. Indeed, if $s_1$ is the first point where $\theta(s_1)=\pi/2$, then we would have $\theta'(s_1)\geq 0$. However from \eqref{trig} we have $\theta'(s_1)=\lambda x(s_1)<0$, which it is contradictory. The same argument proves that the function $\theta(s)$ cannot go to the value $\pi/2$, and thus the function $\theta(s)$ is away from the value $\pi/2$. 

The image of $\theta(s)$ remains in the interval $(0,\pi/2)$ after $s=0$. Indeed, if $\theta(s)$ decreases attaining the value $0$ at $s_2$, then $\theta'(s_2)\leq 0$. However, from \eqref{trig} we have $\lim_{s\to s_2}\theta'(s)=\infty$.  Similarly as in the previous discussion about the value $\pi/2$, the function $\theta(s)$ cannot go to value $0$ because in such a case, $\lim \theta'(s)=\infty$ as $\theta(s)\to 0$. As a consequence, if we fix $\bar{s}$ close $0$, there are  $\theta_1,\theta_2\in\r$, with $ 0<\theta_1<\theta_2<\pi/2$ such that $\theta_1\leq\theta(s)\leq\theta_2$ for all $s>\bar{s}$. This proves $\omega=\infty$ and $\lim_{s\to\infty}x(s)=\lim_{s\to\infty}z(s)=\infty$.  

We now prove the last statement. By dividing \eqref{fi} by $x(s)^2$ and letting $s\to \infty$, the left hand-side of \eqref{fi} is $0$. Then   the L'H\^{o}pital's rule gives
$$-\frac12=\lim_{s\to\infty}\frac{\lambda}{x(s)^2}\int_0^t x(t)\, dt=\frac{\lambda}{2\lim_{s\to\infty}x'(s)}.$$
This gives $\lim_{s\to\infty}\cos\theta(s)=-\lambda$.
 \end{enumerate}
The last statement of the theorem is consequence that $\theta'$ cannot vanish, hence $\theta'$ is always is positive. This implies that the curvature of the planar curve $\gamma$ has sign, in this case, is positive because $\theta'(0)>0$. This is equivalent to say that $\gamma$ is convex.  
\end{proof}

\begin{figure}[hbtp]
\begin{center}
\includegraphics[width=.26\textwidth]{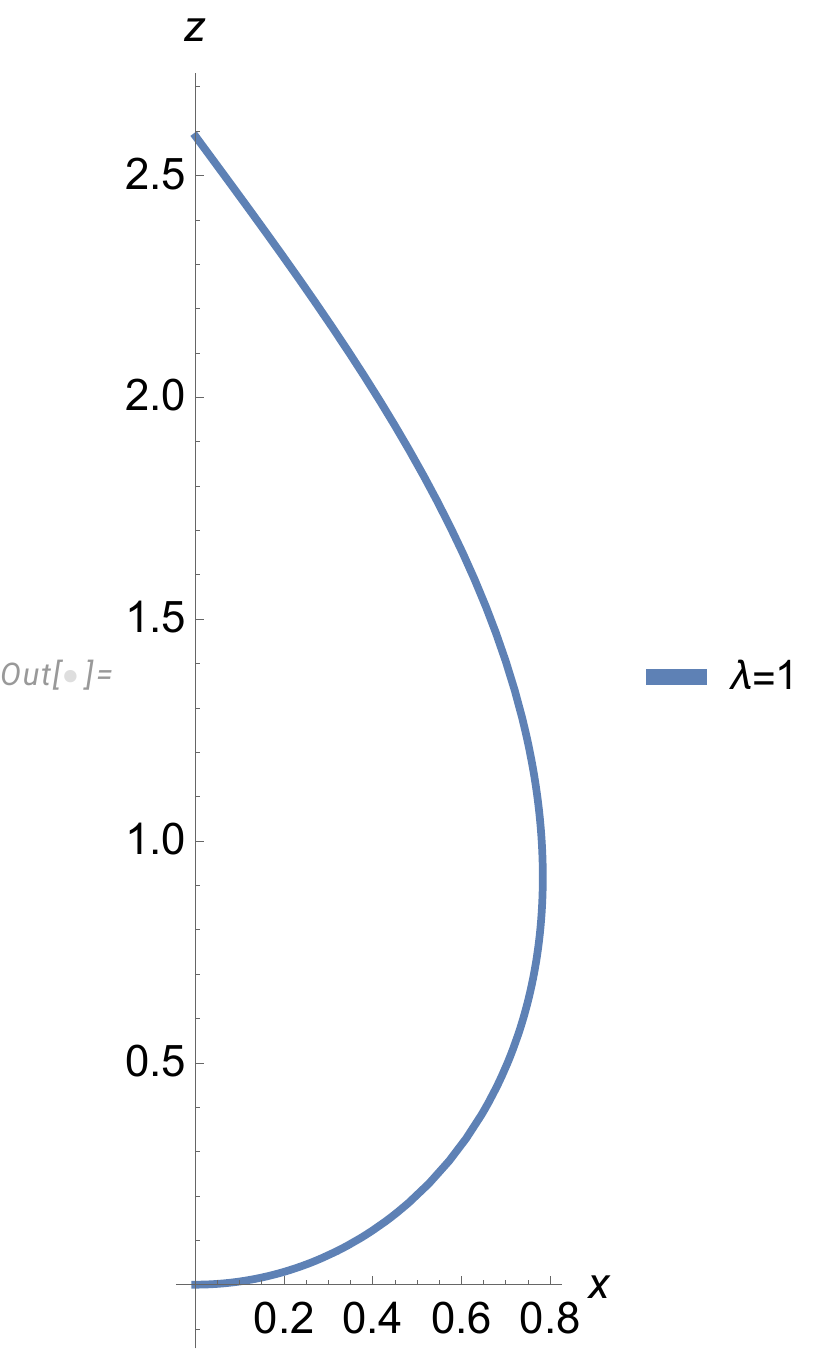}   \includegraphics[width=.28\textwidth]{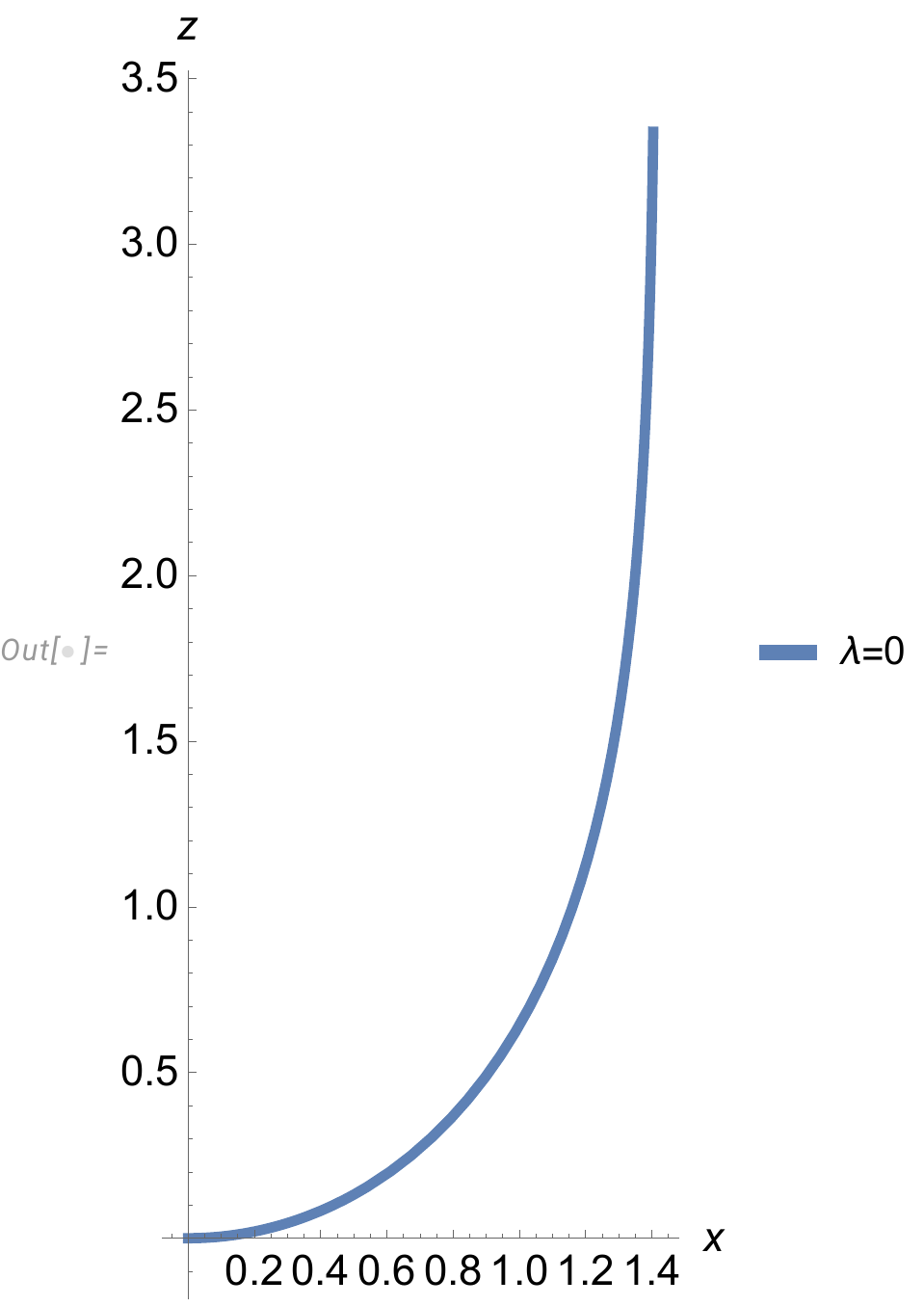}    \includegraphics[width=.4\textwidth]{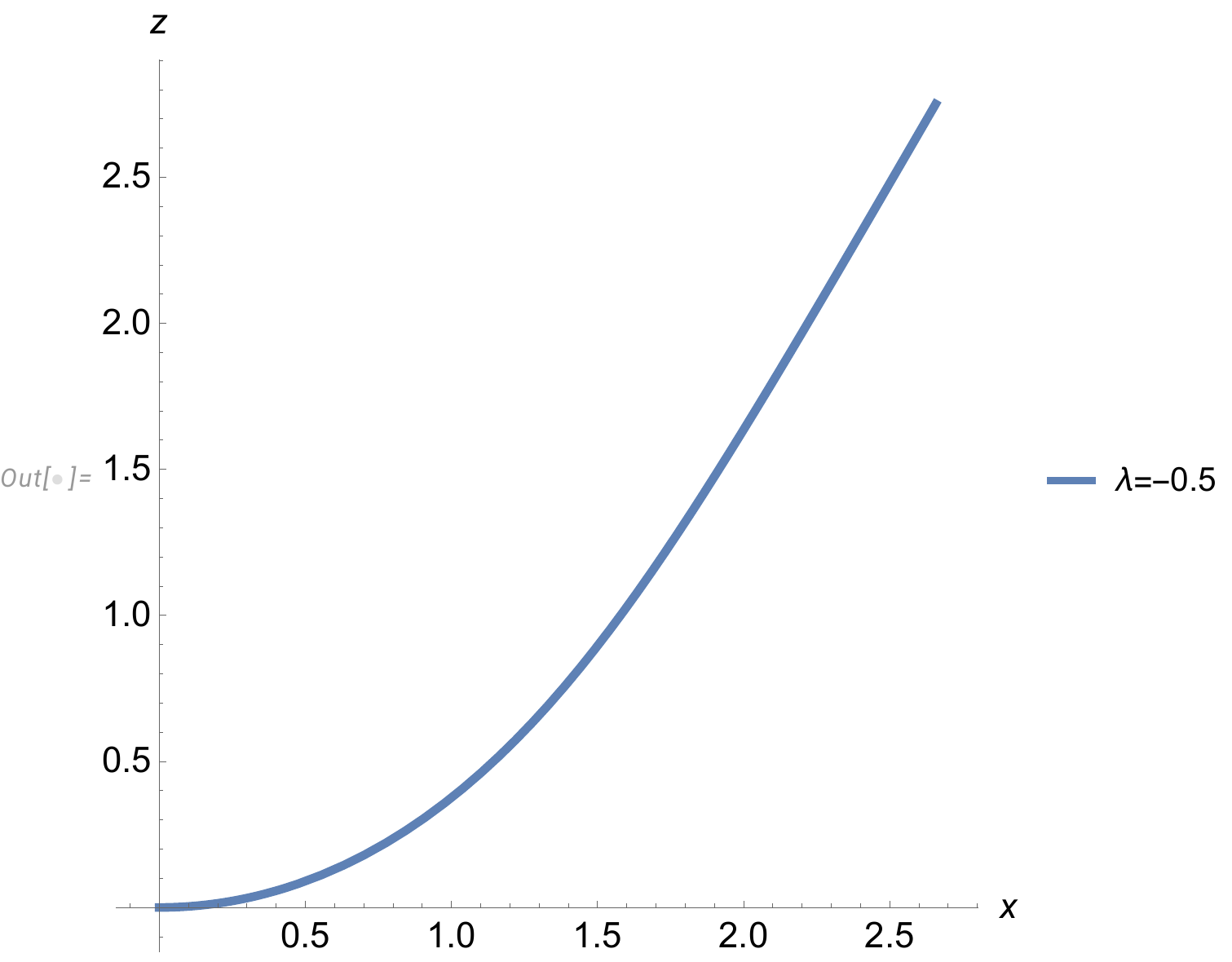} 
\end{center}
\caption{Rotational $\lambda$-translators intersecting the rotation axis for different values of $\lambda$:  $\lambda=1$ (left), $\lambda=0$ (middle) and $\lambda=-0.5$ (right). }\label{fig1}
\end{figure}

Theorem \ref{t2} considers the case that the generating curve $\gamma$ intersects orthogonally the rotation axis   at the starting point $s=0$. With this type of surfaces, it is possible to expect to   have closed $\lambda$-translators if the curve $\gamma$ meets again the rotation axis and at that point, the  intersection is orthogonal.  However, Theorem \ref{t2} asserts that only in the case $\lambda>0$, the curve $\gamma$ comes back to meet the rotation axis, but this intersection is not orthogonal. In consequence, we have proved the non-existence of rotational closed $\lambda$-translators. 

In case of existence of (non-rotational) closed $\lambda$-translators,  we have a restriction on the sign of $\lambda$ and on the genus of the surface.  

\begin{theorem}\label{t-gb} If $\Sigma$ is a closed $\lambda$-translator, then $\lambda\geq 0$ and the genus of $\Sigma$ is $0$ or $1$. Moreover, $\lambda=0$ occurs only if the genus of $\Sigma$ is $1$.

\end{theorem}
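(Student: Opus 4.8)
The plan is to obtain both conclusions from a single global identity, which is why the result is of Gauss--Bonnet type: I would integrate the defining equation \eqref{eq1} over the whole surface. For a closed (compact, boundaryless) surface $\Sigma\subset\r^3$ two standard facts are available. First, $\Sigma$ is orientable and bounds a compact region $\Omega$, so the divergence theorem applied to the constant field $\vec{v}$ gives
\[
\int_\Sigma \langle N,\vec{v}\rangle\, dA=\int_\Omega \mathrm{div}(\vec{v})\, dV=0 .
\]
Second, Gauss--Bonnet gives $\int_\Sigma K\, dA=2\pi\chi(\Sigma)=2\pi(2-2g)$, where $g$ is the genus. Integrating \eqref{eq1} and using these two facts, the flux term disappears and I am left with
\[
\lambda\,\mathrm{Area}(\Sigma)=2\pi(2-2g).
\]
Since $\mathrm{Area}(\Sigma)>0$, this already shows that the sign of $\lambda$ coincides with the sign of $\chi=2-2g$.

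The next and decisive step is to fix the sign of $\lambda$, which the identity alone cannot do. Here I would use that a closed surface necessarily has elliptic points. Let $p_0\in\Sigma$ be a point at which the height function $\langle X,\vec{v}\rangle$ attains its minimum. At $p_0$ the tangent plane is orthogonal to $\vec{v}$, the surface lies locally on one side of it, and the outward normal points downward, so $K(p_0)\ge 0$ and $N(p_0)=-\vec{v}$. Suppose, for contradiction, that $\lambda<0$. Evaluating \eqref{eq1} at $p_0$ gives $K(p_0)=\langle N(p_0),\vec{v}\rangle+\lambda=-1+\lambda<-1<0$, contradicting $K(p_0)\ge 0$. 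Hence $\lambda\ge 0$, and therefore $\chi=2-2g\ge 0$.

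It then remains to read off the topology. Because $\Sigma$ is an orientable closed surface, $\chi=2-2g$ is even and at most $2$; combined with $\chi\ge 0$ this forces $\chi\in\{0,2\}$, i.e. $g\in\{0,1\}$. The identity $\lambda\,\mathrm{Area}(\Sigma)=2\pi\chi$ makes the correspondence precise: $\lambda>0$ is equivalent to $\chi=2$, hence to genus $0$, while $\lambda=0$ is equivalent to $\chi=0$, hence to genus $1$. In particular $\lambda=0$ can occur only when the genus equals $1$, which is the last assertion.

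The hard part is exactly the determination of the sign of $\lambda$: the balance $\lambda\,\mathrm{Area}(\Sigma)=2\pi\chi$ is insensitive to reversing the orientation of $\Sigma$ and by itself does not exclude $\lambda<0$ (equivalently genus $\ge 2$), so a purely integral argument cannot succeed. What breaks this symmetry is the pointwise convexity of $\Sigma$ at its lowest point in the direction $\vec{v}$, the one place where the geometry forces an inequality on $K$ of a definite sign. I would also justify the vanishing of $\int_\Sigma \langle N,\vec{v}\rangle\, dA$ without presupposing more than Gauss--Bonnet already needs: for a closed oriented $\Sigma$ the $2$-form $\iota_{\vec{v}}(dx\wedge dy\wedge dz)$ is closed and exact on $\r^3$, so its integral over $\Sigma$ vanishes, which also covers the immersed case. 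Finally, I note that the extreme-point computation in fact yields the sharper bound $\lambda\ge 1$, since $K(p_0)=\lambda-1\ge 0$; thus genus $0$ is the only case actually realized and the genus-$1$ alternative is the borderline of the Gauss--Bonnet identity.
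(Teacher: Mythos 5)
Your integral identity is exactly the paper's: the flux term $\int_\Sigma\langle N,\vec{v}\rangle\,dA$ vanishes, Gauss--Bonnet turns $\int_\Sigma K\,dA$ into $4\pi(1-g)$, and $\lambda\,\mathrm{area}(\Sigma)=4\pi(1-g)$ together with $\lambda\ge 0$ yields $g\le 1$ and the equivalence $\lambda=0\Leftrightarrow g=1$. The gap sits precisely in the step you yourself call decisive. You evaluate \eqref{eq1} at the lowest point $p_0$ of the height function and set $N(p_0)=-\vec{v}$ on the grounds that ``the outward normal points downward.'' But \eqref{eq1} is stated for whatever Gauss map $N$ the surface carries: nothing in the hypothesis says $N$ is the outward normal, and for an immersed closed surface (the theorem never assumes embeddedness) ``outward'' has no meaning. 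At a height-extremal point one only knows $N(p_0)=\pm\vec{v}$, so your evaluation gives either $\lambda\ge 1$ or merely $\lambda\ge -1$, and the latter proves nothing. For an embedded surface the ambiguity can be repaired by using both the lowest and the highest point, since a single global orientation gives $\langle N,\vec{v}\rangle=-1$ at one of them; but for immersed surfaces even this fails: rotating a figure-eight profile curve $(c+\sin 2t,\sin t)$, $c>1$, about the $z$-axis produces an immersed torus on which, for one of the two orientations, $\langle N,\vec{v}\rangle=+1$ at every point where the tangent plane is horizontal. So the contradiction $K(p_0)=\lambda-1<0$ need not arise.

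The paper's touching argument avoids this entirely by making the contact direction orthogonal to $\vec{v}$ rather than parallel to it: slide a plane $\Pi$ parallel to $\vec{v}$ until it first touches $\Sigma$ at a point $p$. There $K(p)\ge 0$ because $\Sigma$ lies locally on one side of its tangent plane, and $N(p)\perp\vec{v}$ \emph{whichever} unit normal is meant, so \eqref{eq1} reads $\lambda=K(p)\ge 0$ with no orientation input at all. Your closing remark that the computation ``in fact yields $\lambda\ge 1$, hence only genus $0$ is realized'' is a symptom of the same gap: that sharper bound depends on the unproved orientation assumption, and it is exactly what the theorem's last clause (which leaves room for $\lambda=0$ with genus $1$) declines to assert. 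To fix your proof, replace the lowest-point evaluation by the plane-parallel-to-$\vec{v}$ contact argument; the rest of your write-up (the exactness of $\iota_{\vec{v}}(dx\wedge dy\wedge dz)$, Gauss--Bonnet, and the final bookkeeping) is correct and matches the paper.
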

\begin{proof}  
Consider a plane $\Pi$, parallel to $\vec{v}$,  such that $\Pi \cap \Sigma = \emptyset$. Moving the plane $\Pi$ parallel to itself towards $\Sigma$, let $p\in\Sigma$ the first point of contact. Then $K(p)\geq 0$ and $N(p)$ is orthogonal to $\vec{v}$. From Eq. \eqref{eq1}, it follows that $\lambda\geq 0$. 

On the other hand, if $g$ is the genus of $\Sigma$, the Gauss-Bonnet theorem, in combination with \eqref{eq1}, gives
$$
4\pi(1-g)=\int_\Sigma K\, d\Sigma=\int_\Sigma \langle N , \vec{v}  \rangle \, d\Sigma+   \lambda\mbox{ area}(\Sigma).$$
Since for any closed surface, we have $\int_\Sigma \langle N , \vec{v}  \rangle d\Sigma=0$, it follows that $4\pi(1-g)=  \lambda\mbox{ area}(\Sigma)$. Since $\lambda\geq 0$, this proves $g\leq 1$. In particular,   $\lambda=0$ if and only if $g=1$. 
\end{proof}

\begin{remark} If the genus is $0$, the value of $\lambda$ is determined by the Gauss-Bonnet formula, namely,  
$$\lambda=\frac{4\pi}{\mbox{area}(\Sigma)}.$$
\end{remark}

When  $\lambda=-1$, Theorem \ref{t1} establishes that if the generating curve meets orthogonally the rotating axis, then the surface is a horizontal plane. In the following result we analyze the case $\lambda=-1$ but now the surface   meets orthogonally the $xy$-plane. In terms of the generating curve $\gamma$, we have     $\theta(0)=\pi/2$ and $x(0)>0$. After a dilation, we can suppose $x(0)=1$. Consider initial conditions
\begin{equation}\label{trig3}
x(0)=1, \quad z(0)=0, \quad\theta(0)=\frac{\pi}{2}.
\end{equation}

\begin{theorem} \label{t5}
Let $\lambda=-1$. The solution $\gamma$ of \eqref{trig}-\eqref{trig3} is a graph on the $z$-axis. Moreover, $\gamma$ has a branch asymptotic to a horizontal line and the other one ends at finite time. 
\end{theorem}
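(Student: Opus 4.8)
The plan is to decouple the system. With $\lambda=-1$ the third equation of \eqref{trig} reads
\[
\theta'=x\,\frac{\cos\theta-1}{\sin\theta}=-x\tan(\theta/2),
\]
which, together with $x'=\cos\theta$, forms an autonomous system in $(x,\theta)$ that does not involve $z$; the height is recovered afterwards by the quadrature $z(s)=\int_0^s\sin\theta(t)\,dt$. Since $\sin\theta(0)=1\neq0$, the system is regular at $s=0$, so there is a unique maximal solution on some interval $(\omega_-,\omega_+)\ni0$, with $\theta'(0)=-x(0)\tan(\pi/4)=-1<0$. Observe moreover that $\theta'=-x\tan(\theta/2)<0$ as long as $\theta\in(0,\pi)$ and $x>0$, so $\theta$ is strictly decreasing on its whole domain.

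The first step is to produce a first integral. Dividing the $\theta$- and $x$-equations gives $\frac{d\theta}{dx}=-\frac{x\tan(\theta/2)}{\cos\theta}$, which separates and integrates (using $\tan(\theta/2)=\frac{\sin\theta}{1+\cos\theta}$) to
\[
\cos\theta+\ln(1-\cos\theta)=\frac{1-x^2}{2},
\]
the constant being fixed by \eqref{trig3}. Writing $F(c)=c+\ln(1-c)$ on $(-1,1)$, one checks that $F$ increases on $(-1,0)$ and decreases on $(0,1)$, with maximum $F(0)=0$, left endpoint value $F(-1^+)=\ln2-1$, and $F(1^-)=-\infty$. Because the right-hand side equals $F(\cos\theta)\le0$, the first integral forces $x(s)\ge1$ everywhere (with equality only at $s=0$) and $\theta(s)\in(0,\pi)$ throughout. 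In particular $z'=\sin\theta>0$, so $z$ is strictly monotone and $\gamma$ is a graph over the $z$-axis, which is the first claim.

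I would then read off the two branches from the first integral and the monotonicity of $\theta$. For $s>0$ we have $\theta\in(0,\pi/2)$ with $\theta$ decreasing and $x$ increasing; as $s\to\infty$ the identity forces $\cos\theta\to1$, i.e. $\theta\to0$ and $x\to\infty$. Since $|x'|=|\cos\theta|\le1$, the growth of $x$ is at most linear, so no finite-time singularity occurs and $\omega_+=\infty$. Near $\theta=0$ one has $\theta'\sim-\tfrac12 x\theta$ with $x\sim s$, so $\theta$ decays super-exponentially; hence $z'=\sin\theta$ is integrable on $[0,\infty)$ and $z(s)\to z_\infty<\infty$, giving a branch asymptotic to the horizontal line $z=z_\infty$.

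For $s<0$ the angle $\theta$ increases toward $\pi$, and the first integral shows $\cos\theta\to-1$ with $x\to x_\ast=\sqrt{3-2\ln2}<\infty$. The one delicate point — the step I expect to be the main obstacle — is to confirm that this limit is attained in finite parameter time rather than as $s\to-\infty$, since $\theta'$ blows up at $\theta=\pi$. This is settled by the explicit convergent bound
\[
|\omega_-|=\int_{\pi/2}^{\pi}\frac{d\theta}{x\tan(\theta/2)}\le\int_{\pi/2}^{\pi}\cot(\theta/2)\,d\theta=\ln2<\infty,
\]
using $x\ge1$. Hence $\omega_->-\infty$ and the second branch ends at finite time, terminating at the finite point where the tangent becomes horizontal ($\theta=\pi$). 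Everything else follows cleanly from the first integral together with the elementary bound $|x'|\le1$.
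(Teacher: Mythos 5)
Your proof is correct, and it takes a genuinely different route from the paper's. The paper argues qualitatively: it studies the autonomous system \eqref{au}, linearizes at the singular point $P=(0,\tfrac{\pi}{2})$, and reads the global behaviour of the trajectory off the phase portrait (Fig. \ref{fig2}) --- no first integral is produced, and the horizontal asymptote and the finite-time ending are inferred from the picture. You instead integrate the separable equation $d\theta/dx=-x\tan(\theta/2)/\cos\theta$ to obtain the explicit conserved quantity $\cos\theta+\ln(1-\cos\theta)=\tfrac{1}{2}(1-x^2)$, and everything follows quantitatively from it: $x\ge 1$ with equality only at $s=0$; $z'=\sin\theta>0$, hence the graph property; on the forward branch $\theta\to 0$, $x\to\infty$, with Gaussian-type decay of $\theta$ giving integrability of $z'$ and thus an actual horizontal asymptote $z=z_\infty$; on the backward branch $x\to\sqrt{3-2\ln 2}$ and $|\omega_-|\le\int_{\pi/2}^{\pi}\cot(\theta/2)\,d\theta=\ln 2$. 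Your method buys strictly more: it proves convergence of $z$ on the forward branch (the paper's phase-plane statement ``asymptotic to the $x$-axis'' controls only $(x,\theta)$, not $z$), it gives the explicit terminal abscissa $\sqrt{3-2\ln 2}$ and an explicit bound on the backward existence time, and it sidesteps the linearization step entirely --- which is fortunate, because by the paper's own formula the eigenvalues at $P$ are $\pm\sqrt{-\lambda}=\pm 1$ when $\lambda=-1$, i.e. a saddle, not the ``double positive eigenvalue / unstable node'' asserted in the paper's proof. What the paper's approach buys, in exchange, is uniformity: the same phase-plane machinery is reused for $\lambda>0$, $\lambda\in(-1,0)$ and $\lambda<-1$, whereas your first integral is special to $\lambda=-1$ (for general $\lambda$ the analogous quadrature is messier).

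Two presentational points worth tightening, neither a gap. First, on the forward branch, establish $\omega_+=\infty$ \emph{before} letting $s\to\infty$: if $\omega_+<\infty$, then $x$ stays bounded while the maximal solution must approach the boundary $\theta=0^+$, which contradicts the first integral since $F(\cos\theta)\to-\infty$ while $\tfrac12(1-x^2)$ stays bounded. Second, on the backward branch, your change of variables $ds=d\theta/\theta'$ bounds the time needed to sweep any subinterval of $[\pi/2,\pi)$; you should add that, since $x$ is bounded there (again by the first integral) and the sweep time is finite, maximality forces $\theta$ to actually reach $\pi$ in the limit rather than stall at some $\theta_{\max}<\pi$. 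Both are one-line fixes.
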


\begin{proof} At $s=0$, from \eqref{trig} it follows $\theta'(0)=-1$, hence $\theta$ is a decreasing function. As $\theta$ belongs to the interval $(0,\pi/2)$, the function $\theta'$ is negative and $\theta$ is decreasing. However, the function $\theta$ cannot attain the value $0$, neither the value $\pi$ by the degeneracy of \eqref{trig}. This proves that $\gamma$ is a graph on the $z$-axis. At $s=0$, we have $x'(0)=0$ and $x''(0)=1$, proving that the function $x(s)$ has a local minimum at $s=0$. This proves that $x(s)\geq 1$ for all $s\in I$. 

The phase plane of the system \eqref{au} appears in Fig. \eqref{fig2}. The linearization of \eqref{au}  at $P=(0,\frac{\pi}{2})$ has a double positive eigenvalue. This implies that singular point $P$ is an unstable node. A branch of the trajectory is asymptotic to the $x$-axis and the other one ends at finite time when $\theta$ attains the value $\pi$. This proves that $\theta$ takes all values in the interval $(0,\pi)$ and a branch of the solution $\gamma$ satisfies $x(s)\to\infty$ as $\theta(s)\to 0$.
\end{proof} 

\begin{figure}[hbtp]
\begin{center}
\includegraphics[width=.34\textwidth]{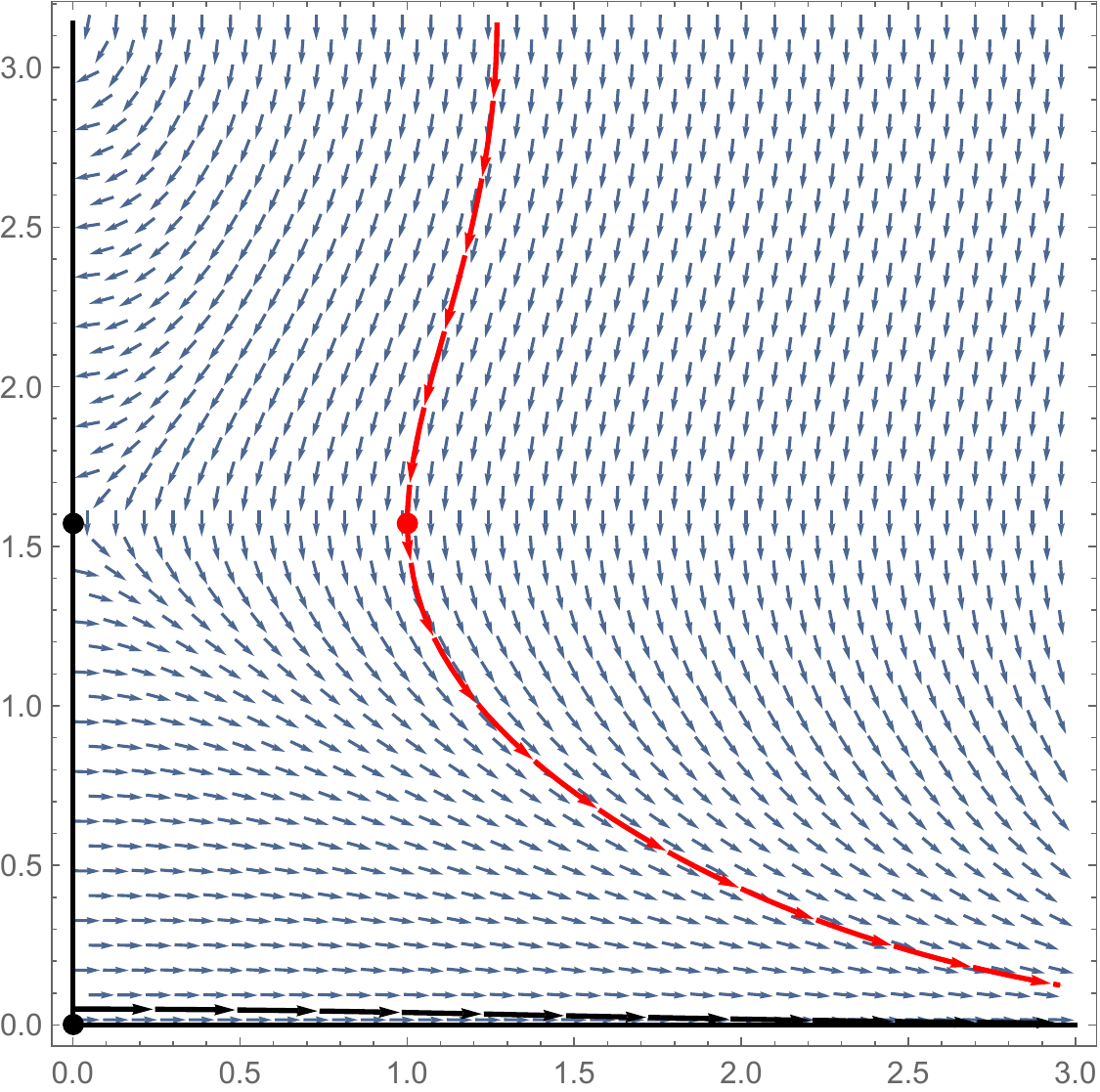} \quad  \includegraphics[width=.6\textwidth]{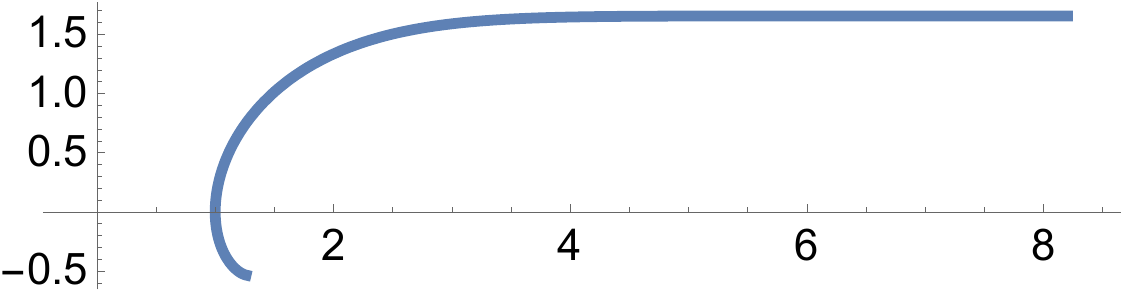}   
\end{center}
\caption{Case $\lambda=-1$. Left: the phase plane. Right: the solution  of \eqref{trig}-\eqref{trig3} }\label{fig2}
\end{figure}

To finish this section, we focus in the case $\lambda<-1$. Then, we know by Thm. \ref{t1}, that  a rotational $\lambda$-translator  cannot  meet orthogonally the rotation axis. However, it is possible to find rotational $\lambda$-translators by dropping this condition.    As in the case $\lambda=-1$, an interesting example occurs with the surface meets orthogonally the $xy$-plane. 
In general, the study of  solutions of \eqref{trig}-\eqref{trig3} can be also considered for all values $\lambda$. In Fig. \ref{figphase} we plotted the (red) trajectories crossing the point $(x,\theta)=(1,\frac{\pi}{2})$ for values $\lambda>0$ and $\lambda\in (-1,0)$. The corresponding   solutions of these trajectories  satisfy the initial conditions \eqref{trig3}. A simple analysis of  Fig. \ref{fig3} gives the following conclusions.
\begin{enumerate}
\item Case $\lambda>0$. A branch of the trajectory ends   when $\theta$ attains the value $0$, where we know that \eqref{trig} degenerates. The other branch of the trajectory ends when $x$ attains the value $0$, which corresponds to the intersection with the rotation axis. 
\item Case $\lambda\in (-1,0)$. A branch of the solution is asymptotic to the angle $\theta_0$ of the case (3) of Thm. \ref{t2}, but the other branch is finite in time when $\theta=\pi$. 
\end{enumerate}
\begin{figure}[hbtp]
\begin{center}
\includegraphics[width=.3\textwidth]{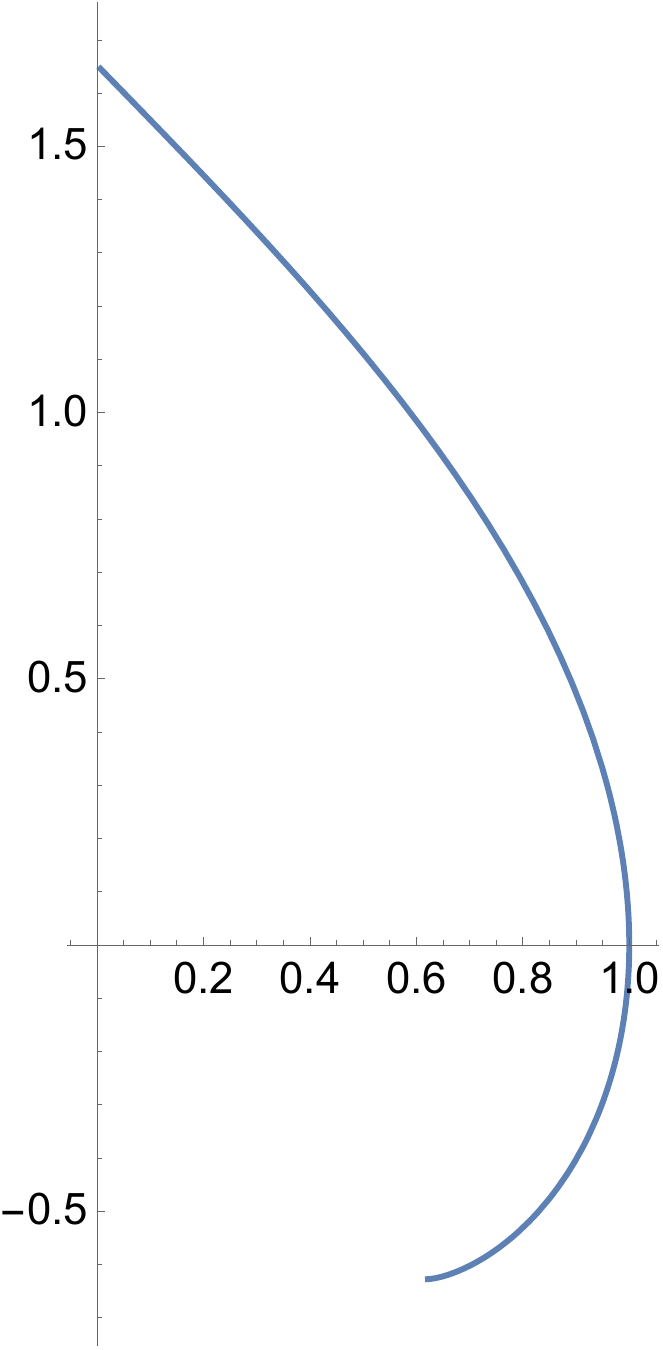} \qquad  \qquad\includegraphics[width=.3\textwidth]{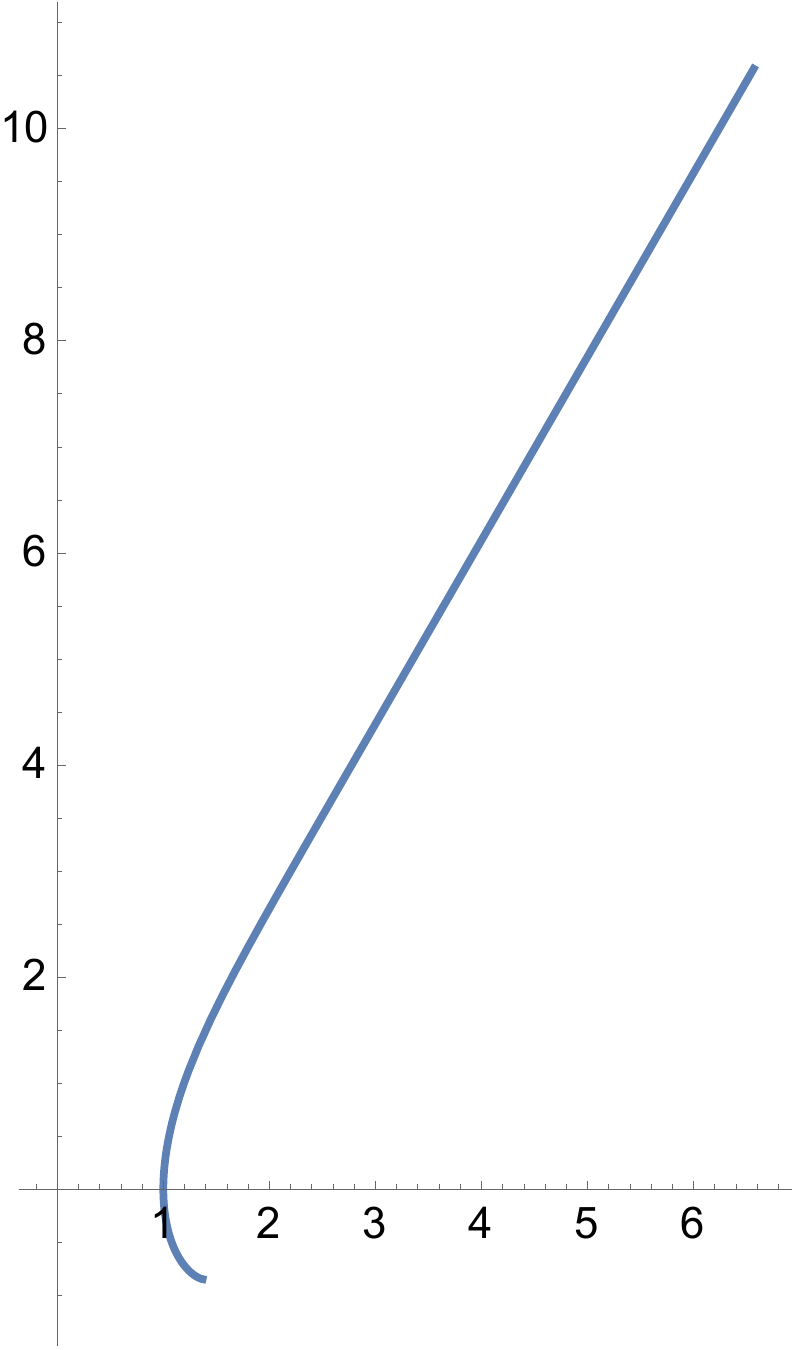}   
\end{center}
\caption{Solutions of \eqref{trig}-\eqref{trig3}:  $\lambda>0$ (left), $\lambda\in (-1,0)$ (right). }\label{fig3}
\end{figure}

Suppose now $\lambda<-1$. At $s=0$ we have $\theta'(0)=\lambda$. In consequence, if $\lambda<0$, the function $\theta$ is decreasing and thus,  that around the point $\gamma(0)$, the surface has negative Gauss curvature. Again, the phase plane \eqref{au}  shows the behaviour of the solution: see Fig. \ref{fig4}.   Now the singular point $P=(0,\frac{\pi}{2})$ is a saddle point because the eigenvalues of the linearization of \eqref{au} are two real numbers with opposite sign. In the phase plane, the trajectory runs from $\theta=\pi$ to $\theta=0$, which implies that the domain of \eqref{trig}-\eqref{trig3} is finite. Moreover, the function $x(s)$ of the solution curve $\gamma$  is bounded. 

\begin{figure}[hbtp]
\begin{center}
\includegraphics[width=.4\textwidth]{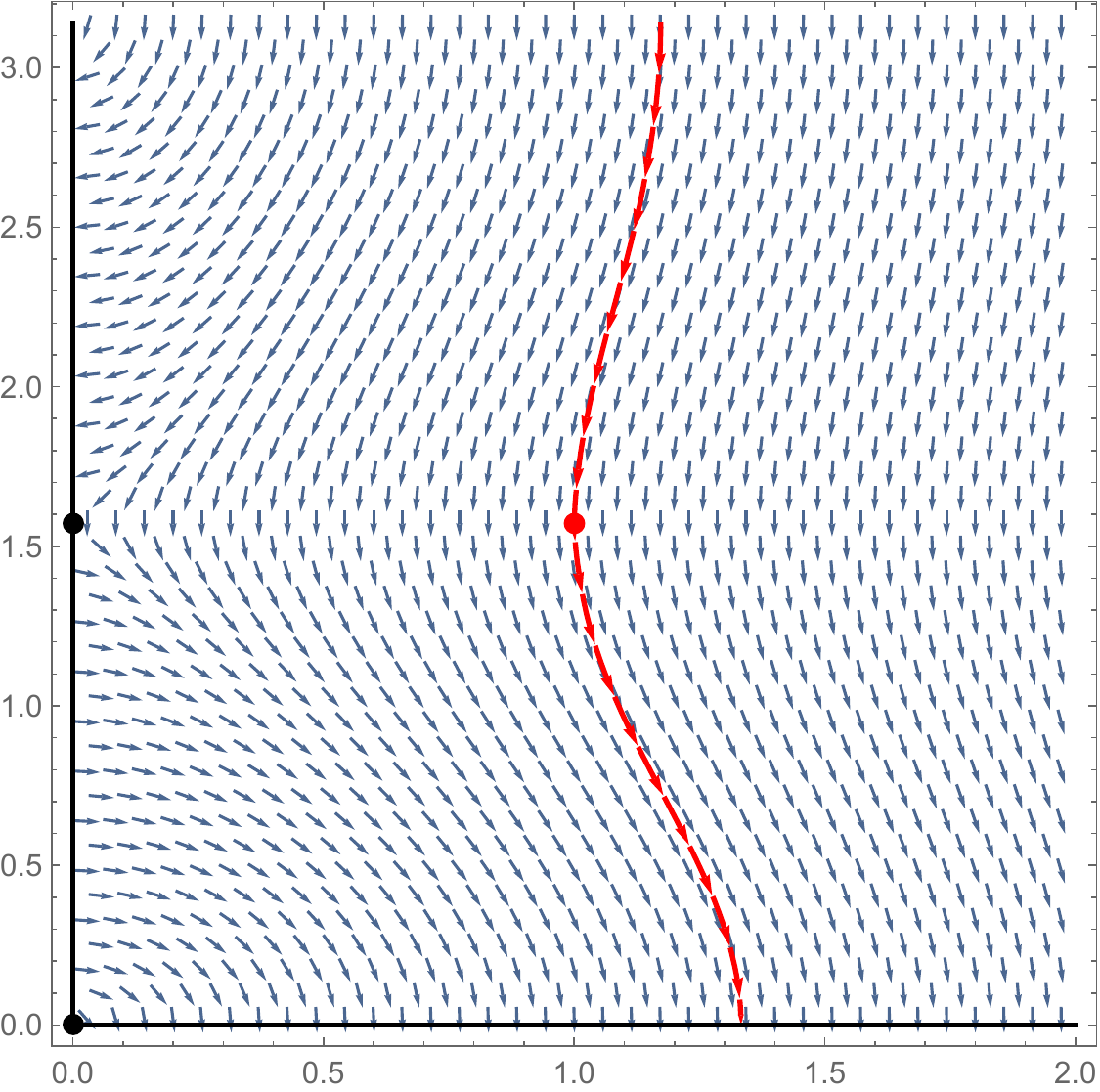} \quad  \includegraphics[width=.5\textwidth]{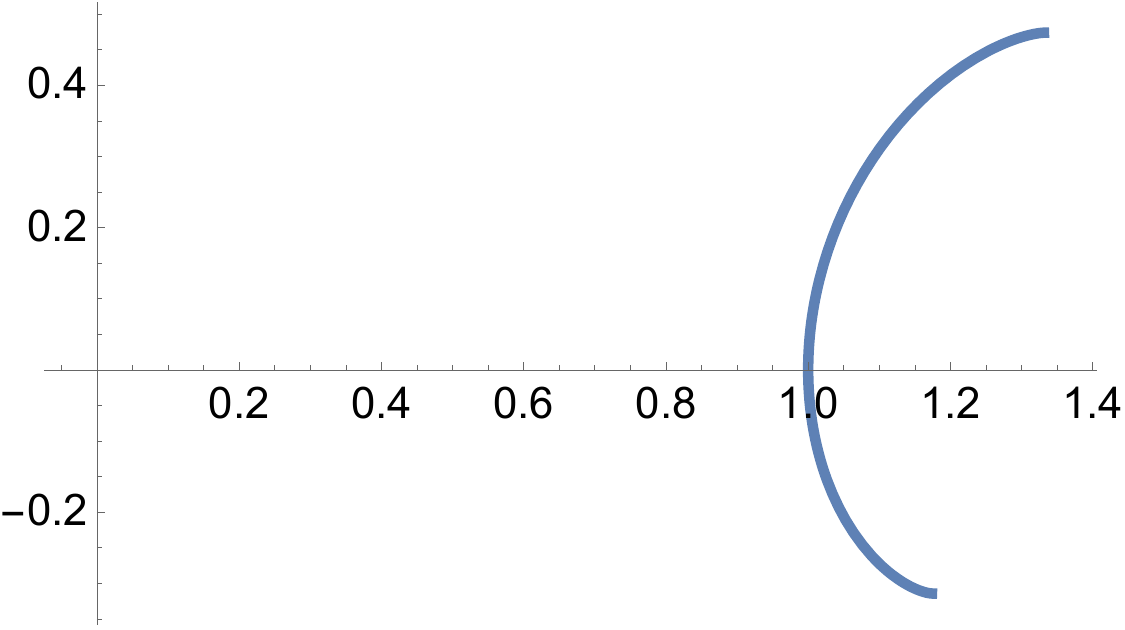}   
\end{center}
\caption{Case $\lambda<-1$. Left: the phase plane. Right: the solution  of \eqref{trig}-\eqref{trig3} }\label{fig4}
\end{figure}

\section*{Acknowledgements}
Rafael L\'opez has been partially supported by MINECO/MICINN /FEDER grant no. PID2023-150727NB-I00,  and by the ``Mar\'{\i}a de Maeztu'' Excellence Unit IMAG, reference CEX2020-001105- M, funded by MCINN/ AEI/10.13039/ 501100011033/ CEX2020-001105-M.

A part of this work was written during R. L\'opez's visit to Fırat University, Turkey, between March and April 2025. The visit was supported by the Scientific and Technological Research Council of Turkey (TÜBİTAK) under its Fellowship Program for Visiting Scientists and Scientists on Sabbatical Leave.


\end{document}